\newtheorem{theorem}{Theorem}
\newtheorem{lemma}{Lemma}
\newtheorem{remark}{Remark}
\newtheorem{result}{Result}
\newtheorem{corollary}{Corollary}
\newtheorem{example}{Example}
\newcommand{\cV}{\mathcal V}
\newcommand{\cS}{\mathcal S}
\newcommand{\fF}{\mathfrak F}
\newcommand{\PG}{\mathrm{PG}\,}
\newcommand{\AG}{{\mathrm{AG}}\,}
\newcommand{\PGL}{{\mathrm{PGL}}\,}
\newcommand{\FF}{{\mathbb F}}
\newtheorem{proposition}[theorem]{Proposition}
\def\eqn#1$$#2$${\begin{equation}\label#1#2\end{equation}}
\def\F{\mathbb F}
\def\cS{{\cal S}}
\def\cV{{\cal V}}
\def\la{\langle}
\def\ra{\rangle}
\DeclareMathOperator*{\opl}{\oplus}
\begin{document}
	\title{On the maximum field of linearity of linear sets}
	\author{Bence Csajb\'ok,
		Giuseppe Marino, Valentina Pepe}
	
	\maketitle

	\begin{abstract}

		Let $V$ denote an $r$-dimensional $\F_{q^n}$-vector space.
		For an $m$-dimensional $\F_q$-subspace $U$ of $V$ assume that $\dim_q \left(\la {\bf v}\ra_{\F_{q^n}} \cap U\right) \geq 2$ for each non zero vector ${\bf v}\in U$.
		If $n\leq q$ then we prove the existence of an integer $1<d \mid n$ such that the set of one-dimensional $\F_{q^n}$-subspaces generated by non-zero vectors of $U$ is the same as the set of one-dimensional
		$\F_{q^n}$-subspaces generated by non-zero vectors of $\la U\ra_{\F_{q^d}}$. If we view $U$ as a point set of $\AG(r,q^n)$, it means that $U$ and $\la U \ra_{\F_{q^d}}$ determine the same set of directions. We prove a stronger statement when $n \mid m$.
		
		In terms of linear sets it means that an $\F_q$-linear set of $\PG(r-1,q^n)$ has maximum field of linearity $\F_q$ only if it has a point of weight one.
		We also present some consequences regarding the size of a linear set.
	\end{abstract}

	\section{Introduction}

	Let $V$ be an $r$-dimensional vector space over $\F_{q^n}$, $q$ is a power of the prime $p$, and let $\PG(V,\F_{q^n})$ be the associated projective space.
	A point set $L$ of $\PG(V,\F_{q^n})=\PG(r-1,q^n)$ is called an \emph{$\F_q$-linear set} of {\it rank} $m$ if it is
	defined by the non-zero vectors of a $m$-dimensional $\F_q$-vector subspace $U$ of $V$, i.e.
	\[L=L_U=\{\la {\bf u} \ra_{\F_{q^n}} \colon {\bf u}\in U\setminus \{{\bf 0} \}\}.\]
	We point out that different vector subspaces can define the same linear set.
	If $r=m$ and $\la L_U \ra=\PG(r-1,q^n)$, then $L_U$ is called a {\it $q$-order subgeometry} of $\PG(r-1,q^n)$.
	
	By Grassmann's Identity it can be easily seen that if $m>(r-1)n$ then $L_U=\PG(V,\F_{q^n})$. Hence  $L_U$ is a proper subset of $\PG(V,\F_{q^n})$ if and only if $\dim \PG(U,\FF_q)\leq rn-n-1$, i.e. the rank of the linear set is at most $rn-n$. A linear set of rank $rn-n$ is said to be of \textit{maximum rank}. Hence, in the rest of the paper, we will assume $ m \leq rn-n$.
	
	\medskip
	For a point $P=\la {\bf v} \ra_{\F_{q^n}}\in \PG(V,\F_{q^n})$ the \emph{weight} of $P$ with respect to the linear set $L_U$ is $w_{L_U}(P):=\dim_q(\la {\bf v} \ra_{\F_{q^n}} \cap U)$.
	
	\medskip
	
	The \emph{maximum field of linearity} of an $\F_q$-linear set $L$ is $\F_{q^t}$ if $t \mid n$ is the largest integer such that $L=L_U$ for some $\F_{q^t}$-subspace $U$, cf. \cite[pg.\ 403]{CsMP}.
	
	\begin{remark}
		Note that the maximum field of linearity $\F_{q^t}$ of the $\F_q$-linear set $L_U$ of $\PG(V,\F_{q^n})$ is always an extension of the field $\F_q$. However, we cannot exclude the existence of a subspace $W$ of $V$ over a subfield $\F_{p^s}$ of $\F_{q^n}$, $p^s > q^t$, which is not an extension of $\F_q$ but it satisfies $L_U=L_W$.
	\end{remark}
	
	In the recent years, starting from the paper \cite{Lu1999} by Lunardon, linear sets have been used to construct or characterize various objects in finite geometry, such as blocking sets and multiple blocking sets in finite projective spaces, two-intersection sets in finite projective spaces, translation spreads of the Cayley Generalized Hexagon, translation ovoids of polar spaces, semifield flocks and finite semifields. For a survey on linear sets we refer the reader to \cite{OP2010}.
	
	\medskip
	
	First recall what is known about the maximum field of linearity of linear sets of the projective line $\PG(1,q^n)$.
	Consider the $\F_q$-linear set $L_U$ of $\PG(1,q^n)$. If $\dim_q\,U > n$ then $L_U=\PG(1,q^n)$. If $\dim_q\,U = n$ then, without loss of generality we may assume that $\langle (0,1)\rangle_{\F_{q^n}} \notin L_U$ and hence
	\[U=U_f:=\{(x,f(x))\colon x\in \F_{q^n}\},\]
	where $f(x)$ is a $q$-polynomial of the form $\sum_{i=0}^{n-1} a_i x^{q^i}\in \F_{q^n}[x]$.
	Then the linear set defined by $U$ is
	\[L_f=L_{U}=\{\la (1,f(x)/x) \ra_{\F_{q^n}} : x \in \F_{q^n}\setminus \{0\}\}.\]
	One might also consider $U_f$ as an affine point set in $\AG(2,q^n)$, the graph of the function $f$.
	The points of the line at infinity $\ell_{\infty}$ of $\AG(2,q^n)$ are called {\it directions}. A direction $(d)$ is the common point of the lines with slope $d$.
	The set of directions determined by $f$ is:
	\[D_f:=\left\{\left(\frac{f(x)-f(y)}{x-y}\right)\colon x,y\in \F_{q^n},\, x\neq y\right\}.\]
	Since $f$ is $\F_q$-linear, for each direction $(d)$, there is a non-negative integer $e$,
	such that each line of $\PG(2,q^n)$ with slope $d$ meets $U_f$ in $q^e$ or $0$ points.
	Also, the additivity of $f$ yields that the set of directions determined by $f$ corresponds to the points
	of the linear set $L_f$ and the weight of the point $\la(1,d)\ra_{\F_{q^n}}$ is $e$. Next recall the following important result:

	\begin{result}[Ball et al. \cite{BBBSSz} and Ball \cite{B2003}]
		\label{part}
		Let $f$ be a function from $\F_q$ to $\F_q$, $q=p^h$, and let $N$ be the number of directions determined by $f$.
		Let $s=p^e$ be maximal such that any line with a direction determined by $f$ that is incident with a point of the graph of $f$ is incident with a multiple of $s$ points of the graph of $f$.
		Then one of the following holds.
		\begin{enumerate}
			\item $s=1$ and $(q+3)/2 \leq N \leq q+1$,
			\item $e \mid h$, $q/s + 1 \leq N \leq (q - 1)/(s - 1)$,
			\item $s=q$ and $N=1$.
		\end{enumerate}
		Moreover if $s>2$ and $f(0)=0$ then the graph of $f$ is $\F_s$-linear.
	\end{result}
	
	``$\F_s$-linear'' in the theorem above means that $U_f$ is an $\F_s$-subspace of $\AG(2,q)$, i.e.
	$L_f$ is an $\F_s$-linear set. Hence the result above can be read as follows
	
	\begin{result}\label{CsMP}
		If $d$ is maximal such that each point of the $\F_q$-linear set $L_f$ of rank $n$ of $\PG(1,q^n)$ is of weight at least $d$ then $d \mid n$ and the maximum field of linearity of $L_f$ is $\F_{q^d}$.
	\end{result}
	
	It also follows that if $\dim_q\,U=n$ and $L_U=L_{U'}\subseteq \PG(1,q^n)$ for some $\F_q$-subspace $U'$, then $\dim_q\, U'=n$, i.e. the rank of such linear sets is well-defined, see \cite[Proposition 2.3]{CsMP}.

	\medskip
	Now we introduce our main results.
	
	\medskip
	\noindent
	\begin{theorem} \label{thm:main1}
		For some integer $a$ let $L_U$ be an $\F_q$-linear set of rank $an$ in $\PG(r-1,q^n)$ such that $w$ is maximal with the property that points of $L_U$ have weight at least $w \geq 2$. Then $w \mid n$ and $U$ is $\F_{q^w}$-linear.
	\end{theorem}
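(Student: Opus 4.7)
The plan is to induct on $r$, the $\F_{q^n}$-dimension of the ambient space $V$; we may assume $\la U\ra_{\F_{q^n}} = V$, since the trivial case $U = V$ (equivalently $a = r$, giving $w = n$ and $U$ itself $\F_{q^n}$-linear) presents no difficulty, so we further assume $a \leq r - 1$. In the base case $r = 2$, this forces $a = 1$ and $L_U$ is a rank-$n$ linear set on $\PG(1, q^n)$. The weight hypothesis gives $|L_U| \leq (q^n - 1)/(q^w - 1) < q^n + 1$, so some point of the line lies outside $L_U$ and, in suitable coordinates, $U$ may be realised as the graph of an $\F_q$-linear polynomial $f$ with $f(0) = 0$. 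The ``Moreover'' clause of Result~\ref{part}, applied with $s = q^w \geq 4 > 2$, then gives directly that $U$ is $\F_{q^w}$-linear and $w \mid n$.

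For the inductive step $r \geq 3$, fix a point $P^* \in L_U$ of minimum weight $w$ with representative $v_{P^*} \in U$; the aim is $\alpha u \in U$ for every $u \in U$ and every $\alpha \in \F_{q^w}$. When $a \geq 2$, for each $u \in U$ choose an $\F_{q^n}$-hyperplane $H$ containing both $v_{P^*}$ and $u$ (possible because $\dim_{\F_{q^n}} \la v_{P^*}, u\ra \leq 2 < r$); note that $\la U\ra_{\F_{q^n}} = V$ together with $H \subsetneq V$ forces $U \not\subseteq H$. Then $U \cap H$ has $\F_q$-dimension $(a - 1)n$ and defines a rank-$(a-1)n$ linear set in $\PG(r-2, q^n)$ whose weights are all $\geq w$ by inheritance; since $P^* \in L_{U \cap H}$ still has weight $w$, the minimum weight is exactly $w$. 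The inductive hypothesis gives that $U \cap H$ is $\F_{q^w}$-linear, hence $\alpha u \in U \cap H \subseteq U$.

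The case $a = 1$ is the principal difficulty, since hyperplane restriction would yield only $U \cap H = 0$; one must reduce dimension by projection instead. Choose a $2$-dimensional $\F_{q^n}$-subspace $W \subseteq V$ with $W \cap U = 0$ (possible because $r \geq 3 = a + 2$) and $W \not\subseteq U + \F_{q^n} v_{P^*}$ (a generic condition), and pick two distinct points $Q_1, Q_2 \in \PG(W, \F_{q^n})$ with representatives $v_{Q_i} \in W \setminus (U + \F_{q^n} v_{P^*})$. Each projection $\pi_i : V \to V/\F_{q^n} v_{Q_i}$ is injective on $U$ (since $\la v_{Q_i}\ra_{\F_{q^n}} \cap U \subseteq W \cap U = 0$), so $\pi_i(U)$ has rank $n$ in $\PG(r-2, q^n)$. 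The decisive observation is that any $2$-dimensional $\F_{q^n}$-subspace meeting $U$ nontrivially meets it in $\F_q$-dimension at least $w$, since it contains the full slice $\la v\ra_{\F_{q^n}} \cap U$ for any nonzero $v$ in the intersection; hence every weight of $L_{\pi_i(U)}$ is $\geq w$, and the choice $v_{Q_i} \notin U + \F_{q^n} v_{P^*}$ ensures that the image of $P^*$ retains weight exactly $w$. By induction each $\pi_i(U)$ is $\F_{q^w}$-linear, so $\pi_i(\alpha u) = \alpha \pi_i(u) \in \pi_i(U)$, and therefore $\alpha u \in U + \F_{q^n} v_{Q_i}$ for $i = 1, 2$. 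Since $U \cap W = 0$ forces $(U + \F_{q^n} v_{Q_1}) \cap (U + \F_{q^n} v_{Q_2}) = U$, we conclude $\alpha u \in U$.

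The main obstacle is the $a = 1$ step: a single projection only gives $\alpha u \in U + \F_{q^n} v_Q$, so two generically chosen centres are needed so that the intersection collapses back to $U$. The most delicate verification is that the minimum weight of $L_{\pi_i(U)}$ is preserved at exactly $w$ (not merely $\geq w$, which would give only $\F_{q^{w'}}$-linearity for some $w' > w$ incompatible with $\F_{q^w}\subseteq \F_{q^{w'}}$ unless $w\mid w'$), and this is precisely what forces the explicit constraint $v_{Q_i} \notin U + \F_{q^n} v_{P^*}$.
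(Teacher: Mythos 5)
Your base case and your $a=1$ branch are essentially sound, and the $a=1$ argument is close in spirit to the paper's own proof: the paper also proceeds by projecting from points lying on a subspace disjoint from $L_U$, and uses tangent lines at a minimum-weight point (Lemma \ref{tangenti}) to guarantee that the projected linear set still has minimum weight exactly $w$ --- your condition $v_{Q_i}\notin U+\F_{q^n}v_{P^*}$ plays precisely that role, and the observation that two centres suffice because $(U+\F_{q^n}v_{Q_1})\cap(U+\F_{q^n}v_{Q_2})=U$ is a clean way to recombine. (Two small points you gloss over: the existence of a line $\PG(W,\F_{q^n})$ disjoint from $L_U$ should be justified by counting, e.g.\ $|L_U|\leq (q^n-1)/(q^2-1)<q^n+1$ so $L_U$ cannot block all lines; and one should note that at most one point of $\PG(W,\F_{q^n})$ can have its whole $\F_{q^n}$-span inside the $(2n-w)$-dimensional space $U+\F_{q^n}v_{P^*}$, so two admissible representatives $v_{Q_1},v_{Q_2}$ do exist.)

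The genuine gap is in the $a\geq 2$ branch. From $U\not\subseteq H$ you conclude that $\dim_{\F_q}(U\cap H)=(a-1)n$, but Grassmann only gives $\dim_{\F_q}(U\cap H)\geq (a-1)n$ and $U\not\subseteq H$ only gives $\dim_{\F_q}(U\cap H)\leq an-1$; equality at the lower bound is not forced, and it genuinely fails. By the duality behind Theorem \ref{d2}, $\dim_{\F_q}(U\cap H)=(a-1)n+j$ where $j$ is the weight of the dual point of $H$ in the dual linear set, and $j$ need not be $0$ nor a multiple of $n$. Concretely, for $n=4$, $r=3$, take $U=\F_{q^4}e_1\oplus\F_{q^2}e_2\oplus\F_{q^2}e_3\subset\F_{q^4}^3$ (rank $8=2n$, every weight even, minimum weight $w=2$ attained at $P^*=\la e_2\ra$): the only hyperplane through $v_{P^*}=e_2$ and $u=e_1$ is $H=\la e_1,e_2\ra_{\F_{q^4}}$, and $\dim_{\F_q}(U\cap H)=6$, which is not a multiple of $n=4$, so your inductive hypothesis (which requires rank $a'n$) does not apply, and $U\cap H$ is not even $\F_{q^4}$-linear since $4\nmid 6$. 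Hyperplane sections simply do not control the rank; projections from points off $L_U$ do. The repair is to run your $a=1$ projection device for all $a$, as the paper does: find an $(r-a-1)$-space $\pi$ disjoint from $L_U$ via the Bose--Burton bound, and project from $r-a$ points of $\pi$ chosen on tangent lines through $P^*$, so that each projected set again has rank $an$ and minimum weight exactly $w$.
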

	
	In Theorem \ref{d2} we formulate the dual statement in terms of vector spaces. This result is related to $e$-divisible rank metric codes studied in \cite{Olga2022}.

	\begin{theorem}\label{thm:main}
		Let $L_U$ be an $\F_q$-linear set of $\PG(V,\F_{q^n})=\PG(r-1,q^n)$, of rank $m$, $m\leq (r-1)n$, such that the following assumptions are satisfied:
		\begin{itemize}
			\item [$i)$] $n\leq q$;
			\item[$ii)$]
			every point of $L_U$ has weigh at least $w\geq 2$.
		\end{itemize}
		Then there exist an integer $d$ with $w \leq d \mid n$ such that $L_U=L_{U'}$ with $U'=\la U \ra_{\F_{q^d}}$.
	\end{theorem}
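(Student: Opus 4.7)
My plan is to reduce the statement to the one-dimensional case of Result~\ref{CsMP} via a line-by-line restriction argument. For any two-dimensional $\F_{q^n}$-subspace $W$ of $V$ whose projective line $\ell=\PG(W,\F_{q^n})$ meets $L_U$ in at least two points, the $\F_q$-subspace $U\cap W$ of $W$ defines the linear set $L_{U\cap W}=L_U\cap\ell$ of $\ell$. Since any $P\in L_{U\cap W}$ lies on $\ell$, one has $\langle v\rangle_{\F_{q^n}}\subseteq W$ for every representative $v$, so the weight of $P$ in $L_{U\cap W}$ equals its weight in $L_U$, hence is at least $w$. Whenever $\dim_q(U\cap W)=n$, Result~\ref{CsMP} then yields an integer $d(W)$ with $w\leq d(W)\mid n$ such that $L_{U\cap W}$ is $\F_{q^{d(W)}}$-linear.

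The next step will be to show that a single integer $d$ (with $w\leq d\mid n$) works simultaneously for all such full-intersection lines. For two full-intersection lines $\ell_1,\ell_2$ sharing a common point $P\in L_U$, the weight of $P$ measured in both $L_{U\cap W_1}$ and $L_{U\cap W_2}$ is a common multiple of $d(W_1)$ and $d(W_2)$; combined with the maximality statement of Result~\ref{CsMP}, this should force $d(W_1)=d(W_2)$. By connecting arbitrary pairs of full-intersection lines through a chain of overlaps anchored at high-weight points, the local integers $d(W)$ should coalesce into a common $d\mid n$ with $d\geq w$.

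The hypothesis $n\leq q$ should enter here via a direction-counting argument---in the spirit of the Blokhuis--Ball bounds underlying Result~\ref{part}---to guarantee that through each point $P_0\in L_U$ sufficiently many lines $\ell$ with $\dim_q(U\cap W)=n$ pass, in fact enough so that the subspaces $U\cap W$ taken over all such $\ell$ together span $\langle U\rangle_{\F_{q^d}}$ as an $\F_{q^d}$-subspace. Once this is achieved, any $v=\sum_i\mu_i u_i\in\langle U\rangle_{\F_{q^d}}$ will lie in some full-intersection $W$, and the $\F_{q^d}$-linearity of $L_{U\cap W}$ will place $\langle v\rangle_{\F_{q^n}}$ inside $L_{U\cap W}\subseteq L_U$, yielding the required equality $L_U=L_{\langle U\rangle_{\F_{q^d}}}$.

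The hardest part will be proving that the local field of linearity $d(W)$ is globally constant, and handling exceptional lines where $\dim_q(U\cap W)<n$ cleanly---both places where the assumption $n\leq q$ seems most delicate, since it is precisely what controls the exceptional configurations. A secondary difficulty is ensuring that the globalization step yields precisely $\langle U\rangle_{\F_{q^d}}$ rather than some larger $\F_{q^d}$-subspace that merely gives the same linear set.
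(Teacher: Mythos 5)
Your plan is a sketch rather than a proof, and its central reduction does not cover the cases where the theorem is hardest. Result~\ref{CsMP} applies only to linear sets of rank exactly $n$ on a line, i.e.\ to lines $\ell=\PG(W,\F_{q^n})$ with $\dim_q(U\cap W)=n$. But the theorem allows $m<n$ (already for $r=2$, where the only ``line'' is the whole $\PG(1,q^n)$ and $\dim_q U=m$ may be strictly less than $n$), and more generally the lines you need may all be ``exceptional''. This is not a technicality to be handled ``cleanly'' at the end: the subcase $r=2$, $m\leq d$ is precisely where the paper's proof of Theorem~\ref{main} spends most of its effort, requiring the Segre/Grassmann-embedding machinery, the reducibility statement of Corollary~\ref{reducibility}, and the technical Lemma~\ref{span}. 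Note also that for $m<n$ the correct conclusion is only $L_U=L_{\la U\ra_{\F_{q^d}}}$ and not that $U$ itself is $\F_{q^d}$-linear (see the paper's Remark with the trace example), so any argument that patches together $\F_{q^{d(W)}}$-linearity of the slices $U\cap W$ is aiming at a statement that is false in general.

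Two further steps are asserted without justification. First, your globalization: if $\ell_1,\ell_2$ are full-rank lines through a common point $P$, the weight of $P$ is a common \emph{multiple} of $d(W_1)$ and $d(W_2)$, which does not force $d(W_1)=d(W_2)$ (e.g.\ $d(W_1)=2$, $d(W_2)=3$, $w_{L_U}(P)=6$); the ``maximality'' in Result~\ref{CsMP} concerns the minimum weight on each line separately and gives you nothing about the shared point. Second, the hypothesis $n\leq q$ does not enter through a direction-counting argument guaranteeing many full-rank lines; in the paper it enters through Lemma~\ref{lem}, which ensures that the degree-$n$ multilinear forms cutting out the Grassmann embedding of $L_U$, once they and their partial derivatives vanish on the $\F_q$-points of $U$, vanish on all of $\la U\ra_{\F_{q^n}}$ --- this is what produces the decomposition $\la U\ra_{\F_{q^n}}=U_0+\dots+U_{d-1}$ and the divisor $d$ of $n$ as the order of the stabilizer of $U_0$ in $\la\sigma\ra$. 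Without a concrete substitute for these steps, the proposal does not constitute a proof.
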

	
	In Section \ref{sec:4} we present lower bounds on the size of $\F_q$-linear sets with maximum field of linearity $\F_q$ by using and extending some recent results form \cite{SP,DBVdV}.
	
	\medskip
	In terms of spreads it yields the following. If we have a Desarguesian $(n-1)$-spread $\cS$ of $\PG(rn-1,q)$, $n\leq q$, and a projective subspace $\Gamma$ such that the elements of $\cS$ meet $\Gamma$ in projective
	subspaces of projective dimension at least one, then there exist $1<d \mid n$, a Desarguesian $(d-1)$-spread $\cS'$ of $\PG(rn-1,q)$ and a projective subspace $\Gamma' \supseteq \Gamma$ such that:  $\cS'$ induces a $(d-1)$-spread on each element of $\cS$ and a $(d-1)$-spread on $\Gamma'$, and the subspaces $\Gamma$ and $\Gamma'$ meet the same set of elements of $\cS$.
	
	\medskip
	
	Consider $\mathrm{PG}(r-1,q^n)$ as $\mathrm{AG}(r-1,q^n) \cup H_{\infty}$, where $H_{\infty}$ is the hyperplane at infinity.
	The \emph{set of directions determined} by an affine point set $S$ of $\mathrm{AG}(r-1,q^n)$ is the set of ideal points $\mathrm{dir}(S)=\{\langle P, Q \rangle \cap H_{\infty} : P,Q \in S, P\neq Q \}$.
	By \cite[Theorem 8]{direzioni}, if $U$ is an $m$-dimensional $\F_{q}$-subspace of
	$\AG(r-1,q^n)$ then $\mathrm{dir}(U)$ is a linear set of rank $m$ of $H_{\infty}$ and viceversa, each $\F_q$-linear set of rank $m$ of $H_{\infty}$ can be obtained in this way. In this representation the weight of a point $P\in \mathrm{dir}(U)$ is $w$ exactly when the line joining $P$ with the origin meets $U$ in $q^w$ points. Then our results can be read as follows.
	
	
	\begin{theorem}
		Let $U$ denote an $m$-dimensional $\mathbb{F}_q$-subspace of $\mathrm{AG}(r-1,q^n)$.
		Let $w$ be maximal such that for each line $\ell$ through the origin either $U\cap \ell=\{(0,\ldots,0)\}$ or $|U\cap \ell|\geq q^w$.
		\begin{itemize}
			\item[(a)] If $n \divides m$, then $w \mid n$ and $\mathbb{F}_{q^w}$ is the largest subfield of $\mathbb{F}_{q^n}$ such that $U$ is $\mathbb{F}_{q^w}$-linear.
		\end{itemize}
		If $n \notdivides m$ then $U$ is not necessarily linear over a larger field, but it acts similarly:
		\begin{itemize}
			\item[(b)] If $q\geq n$, then there exists an integer $d \geq w$, $d \mid n$, such that $\mathrm{dir}(U)=\mathrm{dir}(\langle U \rangle_{\mathbb{F}_{q^d}})$.
		\end{itemize}
	\end{theorem}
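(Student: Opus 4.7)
The plan is to transport the statement into the language of projective linear sets and then invoke Theorem~\ref{thm:main1} and Theorem~\ref{thm:main} directly. Writing $\mathrm{PG}(r-1,q^n) = \mathrm{AG}(r-1,q^n) \cup H_\infty$ with $H_\infty \cong \mathrm{PG}(r-2, q^n)$, the paragraph preceding the theorem identifies $\mathrm{dir}(U)$ with the $\F_q$-linear set $L_U$ of rank $m$ in $H_\infty$, and the weight of a direction point $P$ equals the integer $e$ for which the corresponding line through the origin meets $U$ in $q^e$ points. Consequently, the integer $w$ of the statement coincides with the minimum weight of a point of $L_U$.

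For part (a), assume $n \mid m$ and write $m = an$. If $w \geq 2$, Theorem~\ref{thm:main1} (which does not require $n \leq q$) applied to $L_U$ yields $w \mid n$ and the $\F_{q^w}$-linearity of $U$. For the maximality of $\F_{q^w}$, observe that whenever $U$ is simultaneously $\F_{q^{w_1}}$- and $\F_{q^{w_2}}$-linear it is also linear over the compositum, so the collection of subfields of $\F_{q^n}$ over which $U$ is linear has a unique maximum $\F_{q^{w^*}}$; from the $\F_{q^w}$-linearity one obtains $w \mid w^*$, while $\F_{q^{w^*}}$-linearity forces $\F_{q^{w^*}} v \subseteq U \cap \langle v\rangle_{\F_{q^n}}$ for every non-zero $v \in U$ and hence every point of $L_U$ to have weight at least $w^*$, giving $w \geq w^*$. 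Together these force $w = w^*$. The case $w = 1$ is immediate by the same weight bound: no $\F_{q^{w'}}$-linearity with $w' \geq 2$ can hold, so $\F_q$ is already the maximum subfield.

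For part (b), assume $n \leq q$. If $w = 1$, take $d = 1$ and the equality is tautological. If $w \geq 2$, apply Theorem~\ref{thm:main} to $L_U$ to obtain $d$ with $w \leq d \mid n$ and $L_U = L_{\langle U\rangle_{\F_{q^d}}}$; the dictionary of the first paragraph translates this into the desired equality $\mathrm{dir}(U) = \mathrm{dir}(\langle U\rangle_{\F_{q^d}})$. The only technical point is whether the rank lies in the admissible range $m \leq (r-2) n$ for Theorem~\ref{thm:main} inside $H_\infty$: if $m > (r-2) n$, the Grassmann argument recalled in the introduction forces $L_U = H_\infty$, and the conclusion holds trivially with $d = n$. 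The main obstacle is thus essentially bookkeeping, since the combinatorial core is encapsulated in the two projective theorems already in hand; the subtlest point is the maximality clause of (a), where one must rule out linearity over \emph{incomparable} subfields, and the compositum-lattice argument together with the divisibility $w \mid n$ supplied by Theorem~\ref{thm:main1} makes this routine.
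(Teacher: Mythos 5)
Your proposal is correct and follows essentially the same route as the paper, which states this theorem as a direct translation of Theorems \ref{thm:main1} and \ref{thm:main} through the dictionary $\mathrm{dir}(U)=L_U\subseteq H_\infty$ (with the weight of a direction equal to the logarithmic intersection size of the corresponding line through the origin) and gives no separate proof. The extra bookkeeping you supply --- the compositum argument for maximality in (a), the weight bound $w\geq w^*$, and the edge cases $w=1$ and $m>(r-2)n$ --- is exactly what the paper leaves implicit, and it is carried out correctly.
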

	
	

	\section{Proof of Theorem \ref{thm:main1}}
	Let $X_0,X_1,\ldots, X_{r-1}$ be homogenous projective coordinates for $\PG(r-1,q^n)$. By $E_i$, $i=0,1,\ldots,r-1$, we will denote the point $\la(0,\ldots,0,1,0,\ldots,0)\ra_{\F_{q^n}}$ of $\PG(r-1,q^n)$ with $1$ in the $i$-th position and $0$ elsewhere.
	
	\begin{proposition}
		Let $L_U$ be an $\F_q$-linear set of rank $(r-1)n$ in $\PG(r-1,q^n)$ and assume that $w\geq 2$ is maximal such that the points of $L_U$ have weight at least $w$. Then $w \mid n$ and $U$ is $\F_{q^w}$-linear.
	\end{proposition}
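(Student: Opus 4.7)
The plan is to prove the proposition by induction on $r\geq 2$.

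For the base case $r=2$, a counting comparison between $|U|-1=q^n-1$ and $|L_U|(q^w-1)$ rules out $L_U=\PG(1,q^n)$, so after a projective change of coordinates we may assume $\la(0,1)\ra_{\F_{q^n}}\notin L_U$ and write $U=U_f$ for some $q$-polynomial $f$ with $f(0)=0$. The weight-$\geq w$ hypothesis is precisely the ``multiple-of-$s$'' condition of Result \ref{part} with $s=q^w\geq 4>2$, which delivers both the divisibility $w\mid n$ and the $\F_{q^w}$-linearity of the graph $U=U_f$.

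For the inductive step, assume the statement in $\PG(r-2,q^n)$. If $L_U$ does not span $\PG(r-1,q^n)$, then $\la U\ra_{\F_{q^n}}$ has $\F_{q^n}$-dimension at most $r-1$; comparing with $\dim_q U=(r-1)n$ forces $U=\la U\ra_{\F_{q^n}}$, so $U$ is already $\F_{q^n}$-linear and the conclusion holds with $w=n$. Assume then $\la U\ra_{\F_{q^n}}=V$, and fix once and for all a witness point $P^*=\la v^*\ra_{\F_{q^n}}\in L_U$ of weight exactly $w$. For an arbitrary $u\in U\setminus\{0\}$, choose an $\F_{q^n}$-hyperplane $H$ of $V$ containing both $u$ and $v^*$; this is possible because $\dim_{\F_{q^n}}\la u,v^*\ra_{\F_{q^n}}\leq 2<r$.

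Since $U$ spans $V$ we have $U\not\subseteq H$, and Grassmann's identity gives $\dim_q(U\cap H)=(r-2)n$; thus $L_{U\cap H}$ is a linear set of rank $(r-2)n$ in $H\cong\PG(r-2,q^n)$. For any $Q=\la v\ra_{\F_{q^n}}\in L_{U\cap H}$, the containment $\la v\ra_{\F_{q^n}}\subseteq H$ gives $w_{L_{U\cap H}}(Q)=w_{L_U}(Q)$, so the minimum weight on $L_{U\cap H}$ is at least $w$ and is exactly $w$ because $P^*\in L_{U\cap H}$. The inductive hypothesis applied to $L_{U\cap H}$ yields $w\mid n$ and the $\F_{q^w}$-linearity of $U\cap H$; hence $\F_{q^w}\cdot u\subseteq U\cap H\subseteq U$, and, as $u$ was arbitrary, $U$ itself is $\F_{q^w}$-linear.

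The delicate point is the construction of $H$: including the witness $P^*$ is essential, since otherwise the minimum weight on $L_{U\cap H}$ could strictly exceed $w$ and the inductive hypothesis would yield only $\F_{q^{w'}}$-linearity of $U\cap H$ for some larger divisor $w'$ of $n$, which need not imply $\F_{q^w}$-linearity. The three simultaneous constraints $u\in H$, $v^*\in H$ and $U\not\subseteq H$ are compatible precisely because $r\geq 3$, so the induction runs smoothly from the Ball et al.\ base case up to the required dimension.
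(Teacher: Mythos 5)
Your base case is fine and agrees with the paper's (both reduce $r=2$ to Result \ref{part}/Result \ref{CsMP}), but the inductive step has a genuine gap at the assertion ``Grassmann's identity gives $\dim_q(U\cap H)=(r-2)n$''. Grassmann only gives $\dim_q(U\cap H)=\dim_q U+\dim_q H-\dim_q(U+H)\geq (r-2)n$; equality requires $U+H=V$ as $\F_q$-spaces, which is strictly stronger than $U\not\subseteq H$. Indeed, by the duality formula \eqref{pesi} one has $\dim_q(U\cap H)=(r-2)n+w_{L_{U^{\tau'}}}(H^{\tau})$, where $U^{\tau'}$ is the $n$-dimensional dual subspace; since $L_{U^{\tau'}}$ is a non-empty linear set of rank $n$, there always exist hyperplanes with $\dim_q(U\cap H)>(r-2)n$, and you cannot in general avoid them: for $r=3$ (the very first inductive step) the constraints $u\in H$ and $v^*\in H$ force $H=\la u,v^*\ra_{\F_{q^n}}$ whenever $u,v^*$ are $\F_{q^n}$-independent, so there is no freedom left in the choice of $H$. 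When $\dim_q(U\cap H)>(r-2)n$, the section $L_{U\cap H}$ has rank exceeding the maximum rank for $\PG(r-2,q^n)$, hence is all of $\PG(H)$, and your inductive hypothesis --- stated only for rank exactly $(r-2)n$ --- says nothing about the $\F_{q^w}$-structure of $U\cap H$. The key step ``$\F_{q^w}\cdot u\subseteq U\cap H\subseteq U$'' is therefore unsupported for such $u$, and the induction does not close.

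For contrast, the paper never slices by hyperplanes: after normalizing coordinates so that a point of weight exactly $w$ lies on the diagonal line $X_0=\cdots=X_{r-2}$, it writes $U=\{(x_0,\ldots,x_{r-2},F)\}$ with $F(x_0,\ldots,x_{r-2})=F_0(x_0)+\cdots+F_{r-2}(x_{r-2})$, applies the $r=2$ result to each coordinate line $r_i$ (getting $\F_{q^{w_i}}$-linearity of $F_i$) and to the diagonal line (getting $\F_{q^w}$-linearity of $\sum_i F_i(x)$ and $w\mid\gcd(w_0,\ldots,w_{r-2})$), which yields the claim without ever needing to control hyperplane intersections. If you want to salvage your approach you would have to additionally control the weight of the chosen hyperplane in the dual linear set $L_{U^{\tau'}}$, which is exactly the information your argument is missing.
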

	\begin{proof}
		For $r=2$ this is Result \ref{CsMP} so we may assume $r>2$.
		Since the group $\PGL(r,q^n)$ is $2$-transitive we can always assume that the point $E_{r-1}\notin L_U$ and that a point of minimum weight $w$ lies on the line
		\[m:X_0=X_1=\ldots=X_{r-2}.\] Hence
		\[U=\{(x_0,x_1,\ldots,x_{r-2},F)\colon x_i\in\F_{q^n}, i=0,1,\ldots,r-2\},\]
		where $F=F(x_0,x_1,\ldots,x_{r-2}):\F_{q^n}^{r-1}\rightarrow \F_{q^n}$ is $\F_q$-linear, i.e. \[F(x_0,\ldots,x_{r-2})=\sum_{j=0}^{r-2}\sum_{i=0}^{n-1}\alpha_{j,i}x_j^{q^i}.\]
		
		Consider the line
		\[r_0\colon X_1=X_2=\ldots=X_{r-2}=0.\]
		
		Then \[r_0\cap L_U=\{\la(x_0,0,\ldots,0,F_0)\ra_{\F_{q^n}} :x_0\in\F_{q^n}^*\},\] where $F_0=F(x_0,0,\ldots,0)=\sum_{i=0}^{n-1}\alpha_{0,i}x_0^{q^i}$. Then $r_0\cap L_U$ is an $\F_q$-linear set of rank $n$ on a $\PG(1,q^n)$ and hence $F_0$ is $\F_{q^{w_0}}$-linear, where $w\leq w_0 \mid n$ and $w_0$ is the minimum weight of the points of $r_0$ in $L_U$, cf. Result \ref{CsMP}.
		
		Using the same arguments as above for each line
		\[r_i \colon X_j=0, \mbox{$\forall j\in\{0,1,\ldots,r-2\}$, $j\ne i$},\] with $i=1,\ldots,r-2$, we have that
		$F(x_0,x_1,\ldots,x_{r-2})=F_0(x_0)+\ldots+F_{r-2}(x_{r-2})$ and $F_i(x_i)$ is $\F_{q^{w_i}}$-linear with $1<w_i \mid n$.
		
		Also
		\[m\cap L_U=\{\langle(x,\ldots,x,F(x))\rangle_{\F_{q^n}}\colon x \in \F_{q^n}^*\},\] where
		\[F(x)=F_0(x)+\ldots+F_{r-2}(x)\] defines an $\F_q$-linear set of minimum weight $w$ and by Result \ref{CsMP} $F(x)$ is  $\F_{q^w}$-linear. Since $F(x_0,x_1,\ldots,x_{r-2})=F_0(x_0)+\ldots+F_{r-2}(x_{r-2})$ we have $1<w \mid \gcd(w_0,w_1,\ldots,w_{r-2})$. Thus the maximum field of linearity of $F$ is $\F_{q^w}$.
	\end{proof}

	\begin{lemma}\label{tangenti}
		If $L_U$ is a linear set of $\PG(r-1,q^n)$, $r>2$, of rank $m \leq n(r-2)$, then for each $P\in L_U$ there exist $r-1$ tangent lines to $L_U$ incident with $P$ and spanning $\PG(r-1,q^n)$.
	\end{lemma}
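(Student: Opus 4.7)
The natural plan is to project from $P$ and translate the existence of tangent lines into a counting statement about the complement of a linear set in a projective space of smaller dimension.

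Fix $P\in L_U$, write $P=\la {\bf v}\ra_{\F_{q^n}}$ and set $W=\la {\bf v}\ra_{\F_{q^n}}$. I would consider the projection $\pi_P:\PG(r-1,q^n)\setminus\{P\}\to \PG(V/W,\F_{q^n})\cong \PG(r-2,q^n)$. Lines through $P$ correspond bijectively to points of $\PG(r-2,q^n)$, and $r-1$ lines through $P$ span $\PG(r-1,q^n)$ iff their images are $r-1$ projectively independent points of $\PG(r-2,q^n)$. The image of $U$ in $V/W$ is an $\F_q$-subspace $\bar U$ whose $\F_q$-dimension is $m-w_{L_U}(P)\le m-1\le n(r-2)-1$. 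A line $\ell$ through $P$ is tangent to $L_U$ (i.e.\ $\ell\cap L_U=\{P\}$) precisely when $\pi_P(\ell)\notin L_{\bar U}$: indeed, any further point $Q\in\ell\cap L_U$ would project to $\pi_P(\ell)$ and lie in $L_{\bar U}$.

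So the problem reduces to showing that $\PG(r-2,q^n)\setminus L_{\bar U}$ contains $r-1$ projectively independent points. I would build such a set greedily: choose $Q_1$ outside $L_{\bar U}$, then $Q_2$ outside $L_{\bar U}\cup\la Q_1\ra$, and in general $Q_i$ outside $L_{\bar U}\cup \la Q_1,\ldots,Q_{i-1}\ra$. The worst case is the last step, where $\Pi:=\la Q_1,\ldots,Q_{r-2}\ra$ is a hyperplane of $\PG(r-2,q^n)$. Thus the whole argument rests on the inequality
\[
|L_{\bar U}\cup \Pi|<|\PG(r-2,q^n)|\qquad\text{for every hyperplane }\Pi.
\]
This is equivalent to $|L_{\bar U}\setminus \Pi|<q^{n(r-2)}$, and it follows at once from the standard bound
\[
|L_{\bar U}|\le \frac{q^{m-w_{L_U}(P)}-1}{q-1}\le \frac{q^{n(r-2)-1}-1}{q-1}<q^{n(r-2)},
\]
using the rank bound on $L_{\bar U}$ derived above. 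The earlier greedy steps are strictly easier, since $\la Q_1,\ldots,Q_{i-1}\ra$ is a proper subspace of a hyperplane.

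Having produced $Q_1,\ldots,Q_{r-1}$, the corresponding lines $\ell_i:=\pi_P^{-1}(Q_i)\cup\{P\}$ are tangent to $L_U$, pass through $P$, and span $\PG(r-1,q^n)$ because $\la Q_1,\ldots,Q_{r-1}\ra=\PG(r-2,q^n)$ lifts, together with $P$, to the whole projective space. The only point that requires care is the bookkeeping for the rank of $L_{\bar U}$ and verifying that the strict inequality $(q^{n(r-2)}-1)/(q-1)<q^{n(r-2)}$ suffices (it does for every prime power $q\ge 2$); I do not expect any genuine obstacle beyond this routine estimate.
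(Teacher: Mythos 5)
Your proof is correct and is essentially the paper's argument transported to the quotient $\PG(r-2,q^n)$: the paper counts tangent lines through $P$ directly (total lines through $P$ minus at most $(|L_U|-1)/q$ non-tangent ones) and shows there are more than $(q^{n(r-2)}-1)/(q^n-1)$ of them, so they cannot all lie in a hyperplane, while you count tangent directions as the complement of $L_{\bar U}$ and show they cannot be covered by $L_{\bar U}$ together with a hyperplane. The numerical bound on the number of non-tangent lines through $P$ is the same in both versions, namely $(q^{m-1}-1)/(q-1)$, so the two arguments differ only in presentation (your greedy extraction of $r-1$ independent directions is the explicit version of the paper's ``hence they cannot be contained in the same hyperplane'').
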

	\begin{proof}
		The number of lines incident with $P\in L_U$ is $(q^{n(r-1)}-1)/(q^n-1)$, among these lines, there are at most $(|L_U|-1)/q$ non-tangent lines.
		The size of $L_U$ is at most $(q^m-1)/(q-1)$. It follows that there are at least
		\[(q^{n(r-1)}-1)/(q^n-1)-((q^m-1)/(q-1)-1)/q >
		(q^{n(r-2)}-1)/(q^n-1)\]
		tangent lines incident with $P$ and hence they cannot be contained in the same hyperplane.
	\end{proof}

	\begin{theorem}
		For some integer $a$ let $L_U$ be an $\F_q$-linear set of rank $an$ in $\PG(r-1,q^n)$ such that the points of $L_U$ have weight at least $w \geq 2$ and $w$ is maximal with this property. Then $w \mid n$ and $U$ is $\F_{q^w}$-linear.
	\end{theorem}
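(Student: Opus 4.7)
The plan is to argue by induction on $r - a \geq 1$; the base case $r - a = 1$ is exactly the Proposition just proved (rank $(r-1)n = an$ in $\PG(r-1,q^n) = \PG(a, q^n)$).

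For the inductive step ($r - a \geq 2$), I would select two $\F_{q^n}$-independent vectors $e_1, e_2 \in V$ such that (i) each $\langle e_i \rangle_{\F_{q^n}}$ lies on a tangent line of $L_U$ through some point $Q_i$ of weight exactly $w$ (hence $\langle e_i \rangle_{\F_{q^n}} \notin L_U$), and (ii) the projective line $\langle e_1, e_2 \rangle_{\F_{q^n}}$ is disjoint from $L_U$, equivalently $U \cap \langle e_1, e_2 \rangle_{\F_{q^n}} = \{0\}$. Letting $\pi_i \colon V \to V/\langle e_i \rangle_{\F_{q^n}}$ and $\bar U_i := \pi_i(U)$, the map $\pi_i|_U$ is injective, so $\bar U_i$ is an $\F_q$-linear set of rank $an$ in $\PG(r-2,q^n)$. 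A point $\pi_i(\langle v \rangle_{\F_{q^n}}) \in L_{\bar U_i}$ has weight $\dim_q(U \cap \langle v, e_i \rangle_{\F_{q^n}}) = \sum_{R \in L_U \cap \langle v, e_i \rangle_{\F_{q^n}}} w_{L_U}(R) \geq w$, with equality attained at $v = v_{Q_i}$ because the tangent line $\langle v_{Q_i}, e_i \rangle_{\F_{q^n}}$ meets $L_U$ only at the weight-$w$ point $Q_i$. Therefore $\bar U_i$ satisfies the hypothesis of the theorem with the same $w$ (still maximal) and strictly smaller parameter $(r-1)-a$, and the induction hypothesis gives $w \mid n$ and $\bar U_i$ being $\F_{q^w}$-linear.

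Pulling this back yields $\F_{q^w}\cdot U \subseteq \pi_i^{-1}(\bar U_i) = U + \langle e_i \rangle_{\F_{q^n}}$ for $i = 1, 2$. Condition (ii) together with the $\F_{q^n}$-independence of $e_1, e_2$ forces $(U + \langle e_1 \rangle_{\F_{q^n}}) \cap (U + \langle e_2 \rangle_{\F_{q^n}}) = U$: from $u_1 + \alpha_1 e_1 = u_2 + \alpha_2 e_2$ one gets $u_1 - u_2 \in U \cap \langle e_1, e_2 \rangle_{\F_{q^n}} = \{0\}$ and then $\alpha_1 e_1 = \alpha_2 e_2$ forces $\alpha_1 = \alpha_2 = 0$. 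Intersecting the two containments finishes the induction with $\F_{q^w}\cdot U \subseteq U$, i.e.\ $U$ is $\F_{q^w}$-linear. The main obstacle is producing a pair $(e_1, e_2)$ simultaneously meeting (i) and (ii): Lemma \ref{tangenti} applies throughout the inductive range (since $an \leq n(r-2)$) and yields at least $(q^{n(r-2)}-1)/(q^n-1)$ tangent lines at each weight-$w$ point, hence many admissible $\langle e_i \rangle_{\F_{q^n}}$ outside $L_U$; once $e_1$ is fixed, the bad $\langle e_2 \rangle_{\F_{q^n}}$ for which $\langle e_1, e_2 \rangle_{\F_{q^n}} \cap L_U \neq \emptyset$ are confined to the union of at most $|L_U| \leq (q^{an}-1)/(q-1)$ projective lines through $\langle e_1 \rangle_{\F_{q^n}}$, and a counting comparison that crucially exploits $r - a \geq 2$ shows that the admissible pool strictly dominates this obstruction.
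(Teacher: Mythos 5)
Your overall architecture is essentially the paper's: project $L_U$ from suitable points lying on tangent lines at minimum-weight points into $\PG(r-2,q^n)$, apply the inductive hypothesis to the projected linear set (still of rank $an$ and still of minimum weight exactly $w$), and recombine. Your recombination step $(U+\la e_1\ra_{\F_{q^n}})\cap(U+\la e_2\ra_{\F_{q^n}})=U$ is a clean, coordinate-free version of what the paper does with the coordinate functions $F_j$. One minor slip along the way: the weight of $\pi_i(\la v\ra)$ is indeed $\dim_q\bigl(U\cap\la v,e_i\ra_{\F_{q^n}}\bigr)$, but this is not $\sum_R w_{L_U}(R)$ over the points $R$ of $L_U$ on that line (the correct relation is $q^{\dim}-1=\sum_R\bigl(q^{w_{L_U}(R)}-1\bigr)$); fortunately you only need the inequality $\geq w$ and the equality on a tangent line, and both of those hold for the reasons you give.

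The genuine gap is the existence of the pair $(e_1,e_2)$, which you rightly flag as the main obstacle but do not actually establish. The counting you sketch does not close in the extremal case $a=r-2$ when $q$ is small: with the bound $|L_U|\leq(q^{an}-1)/(q-1)$ that you invoke, the union of the lines joining $\la e_1\ra$ to the points of $L_U$ can contain on the order of $q^{n(a+1)}/(q-1)=q^{n(r-1)}/(q-1)$ points, while the whole of $\PG(r-1,q^n)$ has only about $q^{n(r-1)}$ points and the pool of points on tangent lines at $Q_2$ is itself only a $(1-O(q^{-2}))$ fraction of the space; for $q=2$ the excluded set can swallow the pool entirely, so "exploiting $r-a\geq 2$" is not enough. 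The step is repairable in two ways: either use the sharper bound $|L_U|\leq(q^{an}-1)/(q^2-1)$, which is valid because every point has weight at least $2$ and which does make your comparison go through for all $q$; or, as the paper does, observe that $|L_U|<(q^{n(a+1)}-1)/(q^n-1)$ is below the Bose--Burton bound, so there is an entire $(r-a-1)$-subspace $\pi$ disjoint from $L_U$, and choosing $e_1,e_2$ inside $\pi$ on tangent lines (at a single minimum-weight point $P$, taken inside $\la P,\pi\ra$ via Lemma \ref{tangenti}) yields your conditions (i) and (ii) with no counting over pairs at all. As written, however, the existence step is asserted rather than proved, and the particular estimate you cite is insufficient to prove it.
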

	\begin{proof}
		We have already seen that the result holds when $a=r-1$, thus we may assume $2<r$ and $0<a<r-1$. Assume that the result holds in $\PG(r-2,q^n)$.
		Let $L_U$ be an $\F_q$-linear set of rank $an$ in $\PG(r-1,q^n)$ with minimum weight $w$, and let $P$ be a point of weight $w$.
		
		If $L_U$ meets each $(r-a-1)$-subspace of $\PG(r-1,q^n)$, then it is called an $a$-blocking set and an $a$-fold blocking set has size at least $q^{na}+q^{n(a-1)}+\ldots+q^n+1$ (see \cite{BB66}).
		The rank of $L_U$ is $an$ and hence its size cannot reach this bound. It follows that there is an $(r-a-1)$-subspace $\pi$ of $\PG(r-1,q^n)$ disjoint from $L_U$. Let $P$ be a point of minimum weight and define $\Sigma=\la P, \pi\ra=\PG(W,\F_{q^n})\cong \PG(r-a,q^n)$. Since $\pi$ is disjoint from $L_U$, the rank of $L_U \cap \Sigma=L_{U\cap W}$ is at most $n$. On the other hand, by Grassmann's identity, the rank of $L_{U\cap W}$ is at least $n$ and hence it is exactly $n$. Let $t_1,t_2,\ldots,t_{r-a}$ denote tangents to $L_U$ at $P$ spanning $\Sigma$, cf. Lemma \ref{tangenti}.
		
		Up to the action of the group $\PGL(r,q^n)$ we can assume that $E_0=P$ and $E_{r-i}=\pi\cap t_i$, with $i\in\{1,\ldots,r-a\}$.
		Then
		\[U=\{(x_0,x_1,\ldots,x_{a-1},F_a,F_{a+1},\ldots,F_{r-1}), x_i \in \F_{q^n},i=0,1,\ldots, a-1\},\] where $F_j=F_j(x_0,x_1,\ldots,x_{a-1}):\F_{q^n}^a\rightarrow \F_{q^n}$ is an $\F_q$-polynomial, i.e.
		\[F_j(x_0,\ldots,x_{a-1})=\sum_{k=0}^{a-1}\sum_{i=0}^{n-1}\alpha_{j,k,i}x_k^{q^i},\] for $j=a,a+1,\ldots, r-1$.
		
		By projecting $L_U$ from $E_{r-i}$, $i\in \{1,2,\ldots,r-a\}$ over the hyperplane $X_{r-i}=0$ we get the linear set $L_{U_{r-i}}$ of rank $an$ with
		\[U_{r-i}=\{(x_0,\ldots,x_{a-1},F_a,\ldots,F_{r-i-1},0,F_{r-i+1},\ldots,F_{r-1})\colon x_i \in \F_{q^n},\, 0\leq i \leq a-1\},\] and with minimum weight $w$ (represented by the point $E_0$). By the induction hypothesis, $F_{r-j}$ is $\F_{q^w}$-linear for $j\neq i$. Since this holds for each $i\in \{1,2,\ldots,r-a\}$, the result follows.
	\end{proof}
	
	Finally, we formulate Theorem \ref{thm:main1}  and its dual in terms of vector subspaces.

	\begin{theorem}
		\label{d1}
		Let $V$ denote an $r$-dimensional $\F_{q^n}$-subspace and let $U$ denote an $\F_q$-subspace of $V$ of dimension $m$, $n$ divides $m$, such that there is no $1$-dimensional $\F_{q^n}$-subspace meeting $U$ in an $\F_q$-subspace of dimension $1$. Then $U$ is $\F_{q^d}$-linear with some $d \mid n$ where $d=\min \{ \dim_{\F_q} (U \cap \la {\bf v}\ra_{\F_{q^n}}) : {\bf v}\in V\setminus\{\mathbf{0}\} \}$.
	\end{theorem}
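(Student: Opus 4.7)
The plan is to view Theorem \ref{d1} as nothing more than a translation of Theorem \ref{thm:main1} out of the projective setting and into the language of $\F_q$-subspaces of $V$, so that the proof reduces to matching the two sets of hypotheses and identifying the integer $d$ with the minimum weight $w$ from the projective statement.

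First, I would unfold the definition of weight. For a nonzero ${\bf v}\in V$ the point $P=\la{\bf v}\ra_{\F_{q^n}}$ of $\PG(V,\F_{q^n})$ satisfies $w_{L_U}(P)=\dim_{\F_q}(\la{\bf v}\ra_{\F_{q^n}}\cap U)$ by definition. For a vector outside the $\F_{q^n}$-span of $U$ this intersection is trivial, while for any point of $L_U$ it has $\F_q$-dimension at least $1$. Hence the hypothesis that no $1$-dimensional $\F_{q^n}$-subspace meets $U$ in an $\F_q$-subspace of dimension $1$ is precisely the statement that no point of $L_U$ has weight $1$, that is, every point of $L_U$ has weight at least $2$. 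With the natural reading that the minimum defining $d$ is taken over those vectors ${\bf v}$ for which the intersection is nontrivial (otherwise it is trivially $0$ as soon as $L_U\neq\PG(V,\F_{q^n})$), the integer $d$ is exactly the minimum weight of a point of $L_U$.

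Since $n\mid m$, I would next write $m=an$, so that $L_U$ is an $\F_q$-linear set of rank $an$ in $\PG(r-1,q^n)$ with minimum point-weight $w=d\geq 2$. Applying Theorem \ref{thm:main1} directly gives $d\mid n$ together with the $\F_{q^d}$-linearity of $U$, which is exactly the conclusion of Theorem \ref{d1}.

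I do not expect any real obstacle: the content is already contained in Theorem \ref{thm:main1}, and the only point needing attention is a notational one, namely making it explicit that the minimum in the definition of $d$ ranges over those ${\bf v}\in V\setminus\{\mathbf{0}\}$ producing a point of $L_U$ (equivalently, over ${\bf v}\in U\setminus\{\mathbf{0}\}$, up to $\F_{q^n}$-rescaling), and then transporting the equivalence "weight of $P$" $=$ "$\dim_{\F_q}(\la{\bf v}\ra_{\F_{q^n}}\cap U)$" back and forth between the two formulations.
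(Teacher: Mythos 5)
Your proposal is correct and matches the paper's intent exactly: the paper offers no separate proof of Theorem \ref{d1}, introducing it merely as a reformulation of Theorem \ref{thm:main1} in vector-space language, which is precisely the translation you carry out. Your remark that the minimum defining $d$ must be read over vectors ${\bf v}$ with $\la {\bf v}\ra_{\F_{q^n}}\cap U\neq\{\mathbf{0}\}$ (since otherwise it is $0$ whenever $L_U$ is proper) is a fair and correct clarification of the statement.
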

	
	Let $\sigma: \Lambda\times \Lambda\longrightarrow \F_{q^n}$ be a non-degenerate
	reflexive sesquilinear form on $\Lambda=V(r,q^n)$ and define
	\begin{equation}\label{form:traccia}
		\sigma' \colon ({\bf u}, {\bf v})\in \Lambda\times \Lambda \rightarrow
		\mathrm{Tr}_{q^n/q}(\sigma({\bf u}, {\bf v}))\in \F_q,
	\end{equation}
	where $\mathrm{Tr}_{q^n/q}$ denotes the trace function of $\F_{q^n}$ over $\F_q$.
	Then $\sigma'$ is a non-degenerate reflexive sesquilinear form on
	$\Lambda$, when $\Lambda$ is  regarded as an $rn$-dimensional vector space  over
	$\F_{q}$. Let $\tau$ and $\tau'$ be the orthogonal complement maps
	defined by $\sigma$ and $\sigma'$ on the lattices  of the
	$\F_{q^n}$-subspaces and $\F_{q}$-subspaces of $\Lambda$, respectively.
	Recall that  if $R$ is an
	$\F_{q^n}$-subspace of $\Lambda$ and $U$ is an $\F_{q}$-subspace of $\Lambda$
	then $U^{\tau'}$ is an $\F_q$-subspace of $\Lambda$, $\dim_{q^n}R^\tau+\dim_{q^n}R= r$ and
	$\dim_q U^{\tau'}+\dim_q U= rn$. It easy to see
	that $R^\tau=R^{\tau'}$ for each $\F_{q^n}$-subspace $R$ of $\Lambda$.
	
	Also, $U^{\tau '}$ is called the \emph{dual} of $U$ (w.r.t. $\tau'$).
	Up to $\Gamma\mathrm{L}(r,q^n)$-equivalence, the dual of an $\F_q$-subspace of $\Lambda$ does not depend on the choice of the non-degenerate reflexive sesquilinear forms $\sigma$ and $\sigma'$ on $\Lambda$. For more details see \cite{OP2010}. If $R$ is an $s$-dimensional $\F_{q^n}$-subspace of $\Lambda$ and $U$ is a $t$-dimensional $\F_q$-subspace of $\Lambda$, then
	\begin{equation}\label{pesi}
		\dim_{\F_q}(U^{\tau'}\cap R^\tau)-\dim{\F_q}(U\cap R)=rn-t-sn.
	\end{equation}
	
	\begin{theorem}
		\label{d2}
		Let $V$ denote an $r$-dimensional $\F_{q^n}$-subspace and let $U$ denote an $\F_q$-subspace of $V$ of dimension $m$, $n$ divides $m$, such that there is no hyperplane of $V$ meeting $U$ in an $\F_q$-subspace of dimension $m-n+1$. Then $U$ is $\F_{q^d}$-linear with some $d\mid n$ where $m-n+d=\min \{ \dim_{\F_q} (U \cap \la {\bf v}\ra_{\F_{q^n}}^{\tau}) : {\bf v}\in V\setminus\{\mathbf{0}\} \}$.
	\end{theorem}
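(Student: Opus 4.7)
The plan is to deduce Theorem \ref{d2} from Theorem \ref{d1} by passing to the dual. Set $W:=U^{\tau'}$; then $W$ is an $\F_q$-subspace of $V$ with $\dim_{\F_q}W=rn-m$, and since $n\mid m$ we also have $n\mid \dim_{\F_q}W$, so the divisibility hypothesis of Theorem \ref{d1} is satisfied for $W$.

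The first substantive step is to rewrite the hypothesis on $U$ as a hypothesis on $W$. Applying \eqref{pesi} with $W$ in the role of $U$ and with $R=\langle\mathbf{v}\rangle_{\F_{q^n}}$ (so $s=1$, $t=rn-m$), and using $W^{\tau'}=U$, one obtains
\[\dim_{\F_q}\!\bigl(U\cap \langle\mathbf{v}\rangle_{\F_{q^n}}^{\tau}\bigr)=\dim_{\F_q}\!\bigl(W\cap \langle\mathbf{v}\rangle_{\F_{q^n}}\bigr)+m-n.\]
As $\mathbf{v}$ ranges over $V\setminus\{\mathbf{0}\}$, the subspace $\langle\mathbf{v}\rangle_{\F_{q^n}}^{\tau}$ ranges over all hyperplanes of $V$. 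Hence the assumption of Theorem \ref{d2}, that no hyperplane of $V$ meets $U$ in an $\F_q$-subspace of dimension $m-n+1$, is equivalent to the statement that no one-dimensional $\F_{q^n}$-subspace meets $W$ in an $\F_q$-subspace of dimension $1$, which is exactly the hypothesis of Theorem \ref{d1} for $W$.

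Applying Theorem \ref{d1} to $W$ then produces an integer $d\mid n$ such that $W$ is $\F_{q^d}$-linear and
\[d=\min\bigl\{\dim_{\F_q}(W\cap\langle\mathbf{v}\rangle_{\F_{q^n}}):\mathbf{v}\in V\setminus\{\mathbf{0}\}\bigr\}.\]
Adding $m-n$ to both sides and re-using the displayed formula above delivers the numerical conclusion $m-n+d=\min\{\dim_{\F_q}(U\cap\langle\mathbf{v}\rangle_{\F_{q^n}}^{\tau}):\mathbf{v}\in V\setminus\{\mathbf{0}\}\}$ of Theorem \ref{d2}.

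The last, and I expect most delicate, step is to transfer the $\F_{q^d}$-linearity from $W$ back to $U=W^{\tau'}$. Here one must work with the sesquilinear forms $\sigma,\sigma'$ and not merely with dimensions. The natural route is to verify, for $\lambda\in\F_{q^d}$, $\mathbf{x}\in U$ and $\mathbf{w}\in W$, the identity
\[\sigma'(\lambda\mathbf{x},\mathbf{w})=\mathrm{Tr}_{q^d/q}\!\bigl(\lambda\,\mathrm{Tr}_{q^n/q^d}(\sigma(\mathbf{x},\mathbf{w}))\bigr),\]
using the tower formula $\mathrm{Tr}_{q^n/q}=\mathrm{Tr}_{q^d/q}\circ\mathrm{Tr}_{q^n/q^d}$; the $\F_{q^d}$-linearity of $W$ together with the non-degeneracy of $\mathrm{Tr}_{q^d/q}\colon\F_{q^d}\to\F_q$ then forces $\mathrm{Tr}_{q^n/q^d}(\sigma(\mathbf{x},\mathbf{w}))=0$ for every $\mathbf{w}\in W$, whence $\lambda\mathbf{x}\in W^{\tau'}=U$. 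Modulo this bookkeeping about the forms, the rest of the argument is a direct consequence of \eqref{pesi}.
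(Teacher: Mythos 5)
Your proposal is correct and follows essentially the same route as the paper: pass to the dual $W=U^{\tau'}$, use Equation \eqref{pesi} to translate the hyperplane hypothesis into the condition that one-dimensional $\F_{q^n}$-subspaces meet $W$ in dimension $0$ or at least $2$, apply Theorem \ref{d1}, and dualize back. The only difference is that you spell out the trace-form computation behind the final step (``then also $U$ is $\F_{q^d}$-linear''), which the paper asserts without detail; your verification of that step is sound.
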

	\begin{proof}
		By Equation \eqref{pesi} the $1$-dimensional $\F_{q^n}$-subspaces meet $U^{\tau'}$ in $\F_q$-subspaces of dimension $0$ or of dimension at least $2$. Then $U^{\tau'}$ is $\F_{q^d}$-linear for some $d \mid n$, where $m-n+d=\min \{ \dim_{\F_q} (U \cap \la {\bf v}\ra_{\F_{q^n}}^{\tau}) : {\bf v}\in V\setminus\{\mathbf{0}\} \}$.  Then also $U$ is $\F_{q^d}$-linear.
	\end{proof}
	
	In \cite[Section 4]{Olga2022}, $e$-divisible rank metric codes are studied and similar conclusions are deduced in the special case when each hyperplane meets $U$ in an $\F_q$-subspace of dimension a multiple of $e$, $1 < e \mid n \mid m$.

	\section{Proof of Theorem \ref{thm:main}}
	
	Let us start by describing the geometric setting we adopt to study $\FF_q$--linear sets of $\PG(V,\F_{q^n})= \PG(r-1,q^n)$ (see \cite{Lu1999} and \cite{giuzzipepe15}).
	
	Regarding the $r$-dimensional $\FF_{q^n}$-vector space $V$ as an $\FF_q$--vector space of dimension $rn$, the points of $\PG(r-1,q^n)$ corresponds to a partition of $\PG(rn-1,q)$ into $(n-1)$-dimensional projective subspaces. Such a partition, say $\mathcal S$, is called \textit{Desarguesian spread} of $\PG(rn-1,q)$ and the pair $(\PG(rn-1,q),{\mathcal S})$ is said to be the $\FF_q$-\textit{linear representation} of $\PG(r-1,q^n)$. In this setting, an $\FF_q$-linear set $L_U$ of $\PG(r-1,q^n)$ is the subset consisting of the elements of $\mathcal{S}$ with non-empty intersection with the projective subspace $\PG(U,\FF_q)$ of $\PG(rn-1,q)$ defined by $U$.
	
	Consider the following cyclic representation of $\PG(rn-1,q)$ in $\PG(rn-1,q^n)$. Let $\PG(rn-1,q^n)= \PG(V',\F_{q^n)}$, where $V'=\F_{q^n}^{rn}$ and let ${\mathbf e}_i$ be the $i$-th canonical basis element of $V'$. Let $\sigma$ be the $\F_q$-semilinear map defined by the rule ${\mathbf e}_i\mapsto {\mathbf e}_{i+r}$, where the subscripts are taken mod $rn$ and with accompanying automorphism $x\in\F_{q^n}\mapsto x^q\in\F_{q^n}$. Then $\sigma$ has order $n$ and the $\FF_q$-vector subspace $\mathrm{Fix}\,\sigma=\{(\mathbf{x},\mathbf{x}^q,\ldots,\mathbf{x}^{q^{n-1}}),\mathbf{x}=x_0,x_1,\ldots,x_{r-1} \mbox{ and }x_i \in \FF_{q^n}\}$ of $V'$ defines a set of points of $\PG(rn-1,q^n)$  fixed by $\sigma$. i.e. a subgeometry $\Sigma=\PG(rn-1,q)$. The elements of $\mathcal{S}$ are the subspaces $\Pi_P:=\langle P,P^{\sigma},\ldots,P^{\sigma^{n-1}} \rangle \cap \Sigma$, with $P\in \Pi_0 \cong \PG(r-1,q^n)$ and $\Pi_0=\PG(V_0,\FF_{q^n})$, where $V_0:=\{({\mathbf x}, {\mathbf 0},\ldots,{\mathbf 0})\colon {\mathbf x}=x_0,\ldots,x_{r-1} \mbox{ and }x_i \in \FF_{q^n}\}$ (see \cite{Lu1999}). Let $\Pi_i=\PG(V_i,\FF_{q^n})$ be $\Pi_0^{\sigma^i}$, i.e. $V_i=\langle \mathbf{e}_{h+ir}, h=0,1,\ldots,r-1\rangle$. In the following, we shall identify a point $P$ of $\Pi_0= \PG(r-1,q^n)$ with the spread element $\Pi_P$. We observe that $P$ is just the projection of $\Pi_P$ from $\langle \Pi_1,\Pi_{2},\ldots,\Pi_{n-1}\rangle$ on $\Pi_0$. If $L_U$ is a linear set of rank $m$, then it is induced by an $(m-1)$-dimensional projective subspace $\PG(U,\FF_q) \subset \Sigma$ and it can be viewed both as the subset of $\Pi_0$ that is the projection of $\PG(U,\F_q)$ from $\langle \Pi_1,\Pi_{2},\ldots,\Pi_{n-1}\rangle$ on $\Pi_0$ as well as the subset of $\mathcal{S}$ consisting of the elements $\Pi_P$ such that $\Pi_P \cap \PG(U,\FF_q) \neq \emptyset$.  We stress out that we have defined the subspaces $\PG(U,\FF_q)$ and $\Pi_P$ as subspaces of $\Sigma=\PG(rn-1,q)$. In particular we will denote by $\PG(U,\FF_{q^n})$ the unique projective subspace of $\PG(rn-1,q^n)$ with the same dimension as $\PG(U,\F_q)$ such that $\PG(U,\FF_{q^n})\cap\Sigma=\PG(U,\FF_q)$, i.e. $\PG(U,\F_{q^n})$ is induced by the $\F_{q^n}$-vector subspace $\la U \ra_{\F_{q^n}}$ of $V'$.
	
	Recall that we are assuming $r-1\leq \dim\PG(U,\F_q)\leq(r-1)n-1$. Note that, if a point $P$ has weight $i$ in $L_U$ then \[\dim\left( \Pi_P\cap \PG(U,\FF_q)\right)=\dim\left(\langle P,P^\sigma,\ldots,P^{\sigma^{n-1}}\rangle\cap\PG(U,\FF_{q^n})\right)=i-1,\] and since $\Pi_P$ is projected from $\langle \Pi_1,\ldots,\Pi_{n-1}\rangle$ to the point $P$ of $\Pi_0$, we get
	\[\dim\left(\langle P,P^\sigma,\ldots,P^{\sigma^{n-1}}\rangle\cap\PG(U,\FF_{q^n})\cap \la \Pi_1,\Pi_2,\cdots,\Pi_{n-1} \ra\right)=i-2.\]

	\bigskip
	
	The image under the Grassmann embedding $\varepsilon_n$ of a Desarguesian spread $\cS$ of $\PG(rn-1,q)$ determines the algebraic variety $\cV_{rn} \subset \PG(r^n-1,q)$.
	In more details, put $P=\langle (x_0,x_1,\ldots,x_{r-1},0,0,\ldots,0)\rangle$, then the Grassmann embedding of $\Pi_P$ is the point defined by the vector of the minors of order $n$ of

	\[\left(
	\begin{array}{ccccccccccccc}
		x_0 & x_1 &\cdots & x_{r-1} & 0 & 0 & \cdots & \cdots & \cdots &0 & 0 &\cdots & 0  \\
		0 & 0 &\cdots  & 0 & x_0^q & x_1^q & \cdots & x_{r-1}^q& \cdots & \cdots &\cdots & \cdots  & \cdots\\
		\cdots & \cdots & \cdots & \cdots & \cdots & \cdots & \cdots & \cdots & \cdots & \cdots& \cdots & \cdots & \cdots\\
		0 & 0 & \cdots & 0 & 0 & 0 & \cdots & 0 & \cdots &  x_0^{q^{n-1}}& x_1^{q^{n-1}} &\cdots &  x_{r-1}^{q^{n-1}} \\
	\end{array}
	\right).\]
	
	If we disregard the identically zero minors, then the Grassmann embedding of the Desarguesian spread $\mathcal{S}$, i.e. $\cV_{rn}$,   is the image of the map
	\[ \alpha:\langle (x_0,\ldots,x_{r-1})\rangle\in\PG(r-1,q^n)\mapsto
	\left\langle \left(\prod_{i=0}^{n-1} x_{f(i)}^{q^i}\right)_{f \in \fF}\right\rangle\in \PG(r^n-1,q)
	\subset \PG(r^n-1,q^n),\]
	where $\fF=\{ f: \{0,\ldots, n-1\}\to\{0,\ldots,r-1\} \}$.
	Here, $\alpha$ is the map that makes the following diagram commute:
	$$\begin{tikzcd}[row sep=large]
		\PG(r-1,q^n)\arrow[r, dotted, "\alpha"] \arrow[d ,"\text{$\FF_q$-linear repres.}"'] &\PG(r^n-1,q)  \\
		\mbox{\begin{minipage}{6cm}\begin{centering}
					$\PG(rn-1,q)$ \\ $\cS =$ {Desarguesian Spread} \end{centering}
		\end{minipage}}
		\arrow[ru, ->, "\varepsilon_n"'] & \\
	\end{tikzcd}
	.$$
	For more details see \cite{giuzzipepe15}.
	
	Let $\mathbf{x}^{(i)}:=(x_0^{(i)},x_1^{(i)},\ldots,x_{r-1}^{(i)})\in \mathbb{F}^r$, where $\mathbb{F}$ is any field. Then the Segre variety
	
	\[\displaystyle \Sigma_{rn}:=\underbrace{\PG(r-1,\mathbb{F})\otimes \PG(r-1,\mathbb{F})\otimes \cdots \otimes \PG(r-1,\mathbb{F})}_{n \text{ times}}\subset \PG(r^n-1,\mathbb{F})\]
	
	is the image of the map 

$$s: \PG(r-1,\mathbb{F})\times \PG(r-1,\mathbb{F})\times\cdots \times \PG(r-1,\mathbb{F}) \rightarrow \PG(r^n-1,\mathbb{F})$$ such that:
	
	\[s:\left(\langle\mathbf{x}^{(0)}\rangle,\langle\mathbf{x}^{(1)}\rangle,\ldots,\langle\mathbf{x}^{(n-1)}\rangle\right) \mapsto \left\langle \left(\prod_{i=0}^{n-1} x_{f(i)}^{(i)}\right)_{f \in \fF}\right\rangle.\]
	
	Let $\mathcal{T}$ be the collection of subspaces of $\PG(rn-1,q^n)$ of type $\la P_0, P_1,\ldots,P_{n-1}\ra$ with $P_i=\la v_i \ra \in \Pi_i$, then $\Sigma_{rn}$ is the Grassmann embedding of $\mathcal{T}$, that is, for $v_i \in V_i$, by abuse of notation, we write $v_0\wedge v_1\wedge \cdots \wedge v_{n-1}=v_0\otimes v_1 \otimes \cdots \otimes v_{n-1}$.
	
	Hence we observe that $\cV_{rn}$ is the subvariety  of $\Sigma_{rn}$ fixed by the semilinear map \[\hat\sigma:\ v_0\otimes v_1 \otimes \cdots \otimes v_{n-1}\mapsto v_{n-1}^{\sigma}\otimes v_0^{\sigma} \otimes \cdots \otimes v_{n-2}^{\sigma},\] with $\la v_i \ra_{\F_{q^n}} \in \Pi_i \cong \PG(r-1,q^n)$.

	We recall the following result, which we will explain below.
	
	\begin{theorem}\cite{giuzzipepe15}
		The Grassmann embedding of a linear set $L_U$ of rank $m$ of $\PG(r-1,q^n)$ is the intersection of $\cV_{rn}$
		with a linear subspace.
		In particular, if the rank of $L_U$ is maximum, then the image of the linear set is a hyperplane section of $\cV_{rn}$.
	\end{theorem}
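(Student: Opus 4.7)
The plan is to apply the classical Schubert-variety description of incidence conditions on Grassmannians, then transport it through the identification of $\cV_{rn}$ as the non-degenerate part of the Pl\"ucker image of the Desarguesian spread. First I would identify $\varepsilon_n$ with the ordinary Pl\"ucker embedding of the Grassmannian of $n$-dimensional $\F_q$-subspaces of $V$ (viewed as an $rn$-dimensional $\F_q$-space), so that $L_U$ corresponds, via the $\F_q$-linear representation, to the family of spread elements $\Pi_P$ that meet the fixed $m$-dimensional $\F_q$-subspace $U$ nontrivially.

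Next, I would show that the set of $n$-dimensional subspaces $X$ with $X\cap U\neq 0$ is a linear section of the Grassmannian. Fix an $\F_q$-complement $W$ of $U$ of dimension $rn-m$ (note $rn-m\geq n$ by the standing assumption $m\leq (r-1)n$) and let $\pi:V\to W$ be the projection along $U$. Then $X\cap U\neq 0$ iff the induced map $\wedge^n\pi:\wedge^n X\to \wedge^n W$ vanishes, equivalently iff $\pi(v_1)\wedge\cdots\wedge\pi(v_n)=0$ for any basis $v_1,\ldots,v_n$ of $X$. Choosing an adapted basis with $W=\la e_{m+1},\ldots,e_{rn}\ra_{\F_q}$, this amounts to the vanishing of all Pl\"ucker coordinates $p_I(X)$ indexed by $n$-subsets $I\subseteq\{m+1,\ldots,rn\}$, i.e. $\binom{rn-m}{n}$ linear equations, which cut out a linear subspace $\Lambda$ with $\varepsilon_n(L_U)=\cV_{rn}\cap \Lambda$. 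The reduction to the smaller ambient $\PG(r^n-1,\cdot)$ described in the excerpt just discards Pl\"ucker coordinates that vanish identically on the spread, so the linear-section description persists after restriction.

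For the maximum-rank assertion, $m=(r-1)n$ gives $\binom{rn-m}{n}=1$, so only a single linear equation remains. I would then verify that this equation is not identically zero on $\cV_{rn}$: otherwise every spread element would meet $U$, forcing $L_U=\PG(r-1,q^n)$, which is impossible since $|L_U|\leq (q^{(r-1)n}-1)/(q-1)<(q^{rn}-1)/(q^n-1)=|\PG(r-1,q^n)|$. Hence $\varepsilon_n(L_U)$ is a genuine hyperplane section of $\cV_{rn}$.

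The main obstacle, in my view, is the transition between the full Pl\"ucker space $\PG(\binom{rn}{n}-1,\cdot)$, where the Schubert condition is most transparent, and the smaller ambient $\PG(r^n-1,\cdot)$ in which $\cV_{rn}$ actually sits: one must verify that the linear equations coming from the Schubert condition remain meaningful after one discards the Pl\"ucker coordinates identically vanishing on the spread, and in particular that in the maximum-rank case the single surviving equation is nontrivial on $\cV_{rn}$.
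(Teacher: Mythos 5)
Your argument is correct and rests on the same mechanism as the paper's explanation of this cited result: the incidence condition $\Pi_P\cap\PG(U,\F_q)\neq\emptyset$ is a Schubert-type condition, expressed as linear equations in the Pl\"ucker coordinates (the paper writes these as the vanishing of the $(m+n)$-minors of the matrix obtained by stacking a basis of $U$ on the spread-element matrix, which by Laplace expansion along the last $n$ rows is the same space of linear forms you obtain from the coordinates adapted to a complement of $U$). Your explicit count $\binom{rn-m}{n}=1$ in the maximum-rank case, together with the size estimate showing the resulting linear form is not identically zero on $\cV_{rn}$, correctly completes the hyperplane-section assertion.
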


	Let $B$ be the $m\times rn$  matrix whose rows are an $\FF_q$-basis of $U$. Since $\dim_{q^n}\langle U\rangle_{\F_{q^n}}=\dim_q\, U$, the rows of $B$ are an $\F_{q^n}$-basis of $\langle U\rangle_{\F_{q^n}}$ as well.
	If $P_i=\la v_i \ra \in \Pi_i$, with $v_i=(0,0,\ldots,\mathbf{x}^{(i)},0,\ldots,0)$, then $\PG(U,\F_{q_n})\cap \langle v_0,v_1,\ldots,v_{n-1}\rangle \ne \emptyset$ if and only if the rank of the matrix
	
	\[\left(
	\begin{array}{ccccccccccccc}
		&  &  &  &  &  & B &  &  &  &  &  &  \\
		x_0^{(0)} & x_1^{(0)} &\cdots & x_{r-1}^{(0)} & 0 & 0 & \cdots & \cdots & \cdots &0 & 0 &\cdots & 0  \\
		0 & 0 &\cdots  & 0 & x_0^{(1)} & x_1^{(1)} & \cdots & x_{r-1}^{(1)}& \cdots & 0 & 0 & \cdots  & 0 \\
		\cdots & \cdots & \cdots & \cdots & \cdots & \cdots & \cdots & \cdots & \cdots & \cdots& \cdots & \cdots & \cdots\\
		0 & 0 & \cdots & 0 & 0 & 0 & \cdots & 0 & \cdots &  x_0^{(n-1)}& x_1^{(n-1)} &\cdots &  x_{r-1}^{(n-1)} \\
	\end{array}
	\right)\]
	is less than $n+m$.
	
	Hence the Grassmann embedding of the elements of $\mathcal T$ with non-empty intersection with $\PG(U,\F_{q^n})$ is an algebraic variety $X(\F_{q^n})=V(f_1,f_2,\ldots,f_k)$, where $f_h=f_h(\mathbf{x}^{(0)},\mathbf{x}^{(1)},\ldots,\mathbf{x}^{(n-1)})$ are linear polynomials in each set $\{\mathbf{x}^{(i)}\}$ of variables. Also, $X(\F_{q^n})$ turns out to be a linear section of the Segre variety $\Sigma_{rn}$. We stress out that $f_h(v_0,v_1,\ldots,v_{n-1})=0$ $\forall \, h$ implies that $\la v_0,v_1,\ldots,v_{n-1}\ra\cap \PG(U,\F_{q^n})\neq \emptyset$ only if $v_i\in V_i \setminus \{\mathbf{0}\}$.

	It follows that the Grassmann embedding of the linear set $L_U$ is the algebraic variety
	\[X=X(\F_{q^n})\cap\mathrm{Fix}\,\hat\sigma,\]
	which is also the linear section of $\mathcal{V}_{rn}$ defined by the condition that the rank of

	\[\left(
	\begin{array}{ccccccccccccc}
		&  &  &  &  &  & B &  &  &  &  &  &  \\
		x_0 & x_1 &\cdots & x_{r-1} & 0 & 0 & \cdots & \cdots & \cdots &0 & 0 &\cdots & 0  \\
		0 & 0 &\cdots  & 0 & x_0^q & x_1^q & \cdots & x_{r-1}^q& \cdots & 0 & 0 & \cdots  & 0 \\
		\cdots & \cdots & \cdots & \cdots & \cdots & \cdots & \cdots & \cdots & \cdots & \cdots& \cdots & \cdots & \cdots\\
		0 & 0 & \cdots & 0 & 0 & 0 & \cdots & 0 & \cdots &  x_0^{q^{n-1}}& x_1^{q^{n-1}} &\cdots &  x_{r-1}^{q^{n-1}} \\
	\end{array}
	\right)\]
	is less than $m+n$.
	
	\bigskip

	The aim of the following theorems is to describe the points of weight at least $2$ as singular points of $X(\F_{q^n})$.

	\begin{theorem}\label{singular}
		Let $v_i\in V_i\setminus\{\mathbf{0}\}$, $i\in\{0,\ldots,n-1\}$. If  $\dim \la v_0, v_1,\ldots,v_{n-1} \ra\cap \PG(U,\FF_{q^n})\geq 1$, then $f_h$ and every partial derivative of $f_h$ vanishes in $v_0+v_1+\ldots + v_{n-1}$ $\forall h=1,2,\ldots,k$. On the other hand if $f_h$ and every partial derivative of $f_h$ vanishes in $v_0+v_1+\ldots + v_{n-1}$ $\forall h=1,2,\ldots,k$, then there exists an integer $j\in\{0,\ldots,n-1\}$ such that $\la v_i\colon i=0,1,\ldots,n-1, i\neq j \ra \cap \PG(U,\FF_{q^n}) \neq \emptyset$.
	\end{theorem}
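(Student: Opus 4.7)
The plan is to translate both conditions into statements about the rank of the $(m+n)\times rn$ matrix $M(\vec v)$ whose top $m$ rows form an $\F_{q^n}$-basis $B$ of $\la U\ra_{\F_{q^n}}$ and whose bottom $n$ rows are $v_0,\ldots,v_{n-1}$; recall that the $f_h$ are exactly the $(m+n)\times(m+n)$ minors of $M(\vec v)$. Since the $V_i$ are in direct sum and each $v_i\neq\mathbf{0}$, the rows $v_0,\ldots,v_{n-1}$ are linearly independent over $\F_{q^n}$, so Grassmann's identity yields $\dim_{\F_{q^n}}(\la v_0,\ldots,v_{n-1}\ra\cap\la U\ra_{\F_{q^n}})=m+n-\rank M(\vec v)$. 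Thus the first hypothesis $\dim\la v_0,\ldots,v_{n-1}\ra\cap\PG(U,\F_{q^n})\geq 1$ is equivalent to $\rank M(\vec v)\leq m+n-2$, which forces all $(m+n-1)$- and all $(m+n)$-minors of $M(\vec v)$ to vanish. Because $f_h$ is linear in each entry of the row $v_i$, cofactor expansion along this row identifies $\partial f_h/\partial x_j^{(i)}|_{\vec v}$ with (up to sign) an $(m+n-1)\times(m+n-1)$ minor of $M(\vec v)$, so every partial derivative of every $f_h$ vanishes; this settles the first implication.

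For the converse I would first exploit multilinearity of $f_h$ in $(v_0,\ldots,v_{n-1})$ to rewrite the assumed vanishing of partial derivatives as $f_h(v_0,\ldots,v_{i-1},w,v_{i+1},\ldots,v_{n-1})=0$ for every $h$, every $i\in\{0,\ldots,n-1\}$ and every $w\in V_i$. For $w\neq\mathbf{0}$ the $n$ vectors appearing are nonzero and lie in distinct $V_j$, so this translates into
\[\la v_0,\ldots,v_{i-1},w,v_{i+1},\ldots,v_{n-1}\ra\cap\la U\ra_{\F_{q^n}}\neq\{\mathbf{0}\}.\]
Fixing $i$, pick any nonzero element $\sum_{j\neq i}\alpha_j^w v_j+\beta^w w\in\la U\ra_{\F_{q^n}}$ in this intersection. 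If $\beta^w=0$ occurs for some pair $(i,w)$, then $\sum_{j\neq i}\alpha_j^w v_j$ is a nonzero vector of $\la v_j:j\neq i\ra\cap\la U\ra_{\F_{q^n}}$ and the conclusion follows with $j=i$.

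It remains to rule out the case $\la v_j:j\neq i\ra\cap\la U\ra_{\F_{q^n}}=\{\mathbf{0}\}$ for every $i$. Under this assumption, uniqueness up to scalar forces $\beta^w\neq 0$ always, and normalising $\beta^w=1$ produces an $\F_{q^n}$-linear map $\psi_i:V_i\to P_i:=\la v_j:j\neq i\ra$ such that $\phi_i(w):=w+\psi_i(w)\in\la U\ra_{\F_{q^n}}$, with each $\phi_i$ injective. The main work is to bound $\dim_{\F_{q^n}}\sum_i\phi_i(V_i)\subseteq\la U\ra_{\F_{q^n}}$ from below. Since $\psi_i(w_i)\in P_i$ has $V_i$-component $\mathbf{0}$ and $V_k$-component a scalar multiple of $v_k$ for each $k\neq i$, projecting a relation $\sum_i\phi_i(w_i)=\mathbf{0}$ onto the summand $V_k$ forces $w_k\in\F_{q^n}\,v_k$; writing $w_k=c_k v_k$ one extracts a linear system $A\vec c=\mathbf{0}$ whose $n\times n$ matrix $A$ has $1$'s on the diagonal, so $\rank A\geq 1$ and the space of relations has $\F_{q^n}$-dimension at most $n-1$. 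Consequently
\[\dim_{\F_{q^n}}\sum_{i=0}^{n-1}\phi_i(V_i)\geq rn-(n-1)=(r-1)n+1,\]
which contradicts the standing assumption $m\leq(r-1)n$ on $\dim_{\F_{q^n}}\la U\ra_{\F_{q^n}}=m$. Hence the easy case must occur for some $i$, completing the converse. The main obstacle is precisely this last dimension count: one needs to verify that the constructed $\psi_i$ really lands in the proper subspace $P_i$ (not merely in $\bigoplus_{j\neq i}V_j$), so that the reduction $w_k\in\F_{q^n}v_k$ is available and the matrix $A$ carries a genuine nonzero diagonal forcing $\rank A\geq 1$.
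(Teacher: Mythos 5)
Your proof is correct, and while the overall strategy matches the paper's (the forward direction via multilinearity, the converse by contradiction plus a dimension count showing $\la U\ra_{\F_{q^n}}$ would be too large), the execution differs in ways worth noting. For the forward implication the paper argues geometrically: a line in $\la v_0,\ldots,v_{n-1}\ra\cap\PG(U,\F_{q^n})$ meets every hyperplane $\la v_i: i\neq j\ra$, so replacing $v_j$ by any $w\in V_j$ keeps the intersection nonempty, and multilinearity of $f_h$ then kills all partials; your identification of the partials with signed $(m+n-1)$-minors of the stacked matrix, killed by the rank condition $\mathrm{rank}\,M\leq m+n-2$, is an equivalent and equally clean route. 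For the converse, the paper introduces $T_j=\la v_i,V_j: i\neq j\ra\cap\PG(U,\F_{q^n})$, shows each has projective dimension at least $r-1$, and that modulo the unique intersection point $\la\lambda_0v_0+\cdots+\lambda_{n-1}v_{n-1}\ra$ the $T_j$ are in direct sum, forcing $\dim\PG(U,\F_{q^n})\geq(r-1)n$. Your linear sections $\phi_j:V_j\to\la U\ra_{\F_{q^n}}$ are precisely parametrizations of the vector spaces underlying the $T_j$, and your kernel computation via the unit-diagonal matrix $A$ delivers the same bound $\dim_{\F_{q^n}}\la U\ra_{\F_{q^n}}\geq(r-1)n+1$; this is arguably more self-contained, since both the lower bound on $\dim T_j$ and the near-independence of the $T_j$ fall out of the explicit parametrization rather than a quotient argument. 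Two small points you should make explicit: under the contradiction hypothesis each intersection $\la v_j\,(j\neq i),w\ra\cap\la U\ra_{\F_{q^n}}$ is \emph{exactly} one-dimensional (it cannot have dimension $2$, as it would then meet the hyperplane $\la v_j: j\neq i\ra$), which is what makes $\psi_i$ well defined; and the $\F_{q^n}$-linearity of $\psi_i$, which you assert, needs the short verification comparing $V_i$-components of $\phi_i(w)+\phi_i(w')$ and $\phi_i(w+w')$ inside that one-dimensional intersection.
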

	\begin{proof}
		We observe that $\la v_0,v_1,\ldots,v_{n-1} \ra\cap \PG(U,\F_{q^n}) \neq \emptyset$ if and only if $f_h$ vanishes in $v_0+v_1+\ldots + v_{n-1}$ $\forall h=1,2,\ldots,k$.
		
		If $\dim \la v_0,v_1,\ldots,v_{n-1} \ra\cap \PG(U,\F_{q^n}) \geq 1$, then any hyperplane of $\la v_0,v_1,\ldots,v_{n-1} \ra$ has non-empty intersection with $\PG(U,\FF_{q^n})$, so $\la v_i : i \neq j \ra \cap \PG(U,\FF_{q^n}) \neq \emptyset$ for any fixed $j \in \{0,1,\ldots,n-1\}$. Therefore, for every fixed $j \in \{0,1,\ldots,n-1\}$, $\la  w_0,  w_1,\ldots, w_{n-1} \ra \cap \PG(U,\FF_{q^n})\neq \emptyset$ for every choice of $w_j\in V_j\setminus\{\mathbf{0}\}$ and $w_t=v_t, t \neq j$. By the linearity of $f_h$ with respect to each set of variables $\{x_0^{(i)},x_1^{(i)},\ldots,x_{n-1}^{(i)}\}$, we get that any of the $rn$ partial derivative of $f_h$ vanishes in $v_0+v_1+\cdots + v_{n-1}$.
		
		Now suppose that $f_h$ and every partial derivative of $f_h$ vanishes in $v_0+v_1+\ldots + v_{n-1}$. By $f_h(v_0,v_1,\ldots,v_{n-1})=0$ $\forall \, h$, we get $\la v_0,v_1,\ldots,v_{n-1}\ra \cap \PG(U,\F_{q^n}) \neq \emptyset$. Again, since $f_h$ is linear in each set of variables $\{x_0^{(i)},x_1^{(i)},\ldots,x_{n-1}^{(i)}\}$ and every partial derivative of $f_h$ vanishes in $v_0+v_1+\cdots + v_{n-1}$, we get that $\la u,v_i\colon i=0,1,\ldots,n-1, i \neq j \ra\cap \PG(U,\F_{q^n}) \neq \emptyset$ $\forall \, u \in V_j\setminus \{\mathbf{0}\}$, $\forall \, j \in \{0,1,\ldots,n-1\}$. Suppose by contradiction that $\la v_i\colon i=0,1,\ldots,n-1, i \neq j \ra \cap \PG(U,\FF_{q^n}) = \emptyset$ $\forall \, j=0,1,\ldots, n-1$. Then, note that
		\begin{equation}\label{form1}
			\PG(U,\FF_{q^n})\cap\langle v_0,v_1,\ldots,v_{n-1}\rangle=\la \lambda_0v_0+\lambda_1v_1+\ldots+\lambda_{n-1}v_{n-1}\ra,
		\end{equation} for some $\lambda_i\in \F_{q^n}\setminus \{0\}$,
		and in order to have $\la u,v_i\colon i=0,1,\ldots,n-1, i \neq j\ra\cap \PG(U,\F_{q^n}) \neq \emptyset$ $\forall \, u \in V_j\setminus \{\mathbf{0}\}$, we must have $\dim \la v_i, V_j\colon i=0,1,\ldots,n-1, i\neq j \ra \cap \PG(U,\F_{q^n}) \geq r-1$  $\forall \, j=0,1,\ldots, n-1$. Let $$T_j:=\la v_i, V_j\colon i=0,1,\ldots,n-1, i \neq j \ra \cap \PG(U,\F_{q^n}).$$
		Since $\la v_i, V_j\colon i=0,1,\ldots,n-1, i \neq j \ra \cap \displaystyle\bigoplus_{k\neq j}\la v_i, V_k\colon i=0,1,\ldots,n-1, i \neq k \ra=\la v_0,v_1,\ldots,v_{n-1} \ra$,
		we have $T_j \cap \la T_k, k=0,1,\ldots,n-1, k\neq j \ra \subseteq\la v_0,v_1,\ldots,v_{n-1} \ra $ and by \eqref{form1} we get $T_j \cap \la T_k, k=0,1,\ldots,n-1, k\neq j \ra\subseteq \la\lambda_0v_0+\lambda_1v_1+\ldots+\lambda_{n-1}v_{n-1}\ra$. Hence, in the quotient space with respect to such a point each $T_j$ has projective dimension at least $r-2$ and the $\F_{q^n}$-subspaces defining $T_0,T_1,\ldots,T_{n-1}$ are in direct sum. Thus $\PG(U,\F_{q^n})$ contains a projective subspace of dimension at least $(r-1)n$, contradicting the assumption $\dim \PG(U,\F_{q^n}) \leq (r-1)n-1$.
	\end{proof}

	From the previous theorem, the following result holds true.
	\begin{corollary}
		\label{cor:singular}
		Let $L_U$ be an $\FF_q$-linear set of $\Pi_0\simeq \PG(r-1,q^n)$ and let $P:=\la v \ra \in \Pi_0$. Then if $P$ is a point of $L_U$ of weight at least $2$ then every partial derivative of $f_h$ vanishes in $v+v^\sigma+\ldots + v^{\sigma^{n-1}}$ for each $h$.
	\end{corollary}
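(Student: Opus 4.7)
The plan is to deduce the corollary as a direct specialization of Theorem \ref{singular}, applied with $v_i := v^{\sigma^i}$ for $i=0,1,\ldots,n-1$.

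First I would verify that $v^{\sigma^i}$ lies in $V_i\setminus\{\mathbf{0}\}$. This is immediate from the definition of $\sigma$: because $\sigma$ cyclically sends $\mathbf{e}_h$ to $\mathbf{e}_{h+r}$ (indices mod $rn$) with companion automorphism $x\mapsto x^q$, iterating $i$ times maps a vector supported on the first block of coordinates (i.e.\ in $V_0$) to a non-zero vector supported on the $i$-th block, i.e.\ in $V_i$. In particular $v\in V_0\setminus\{\mathbf{0}\}$ forces $v^{\sigma^i}\in V_i\setminus\{\mathbf{0}\}$ as required by the hypothesis of Theorem \ref{singular}.

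Next, I would check the hypothesis $\dim\la v,v^\sigma,\ldots,v^{\sigma^{n-1}}\ra \cap \PG(U,\FF_{q^n})\geq 1$. This is exactly the identity recorded just above Theorem \ref{singular}: for a point $P=\la v\ra$ of $L_U$ of weight $i$, one has
\[
\dim\bigl(\la P,P^\sigma,\ldots,P^{\sigma^{n-1}}\ra \cap \PG(U,\FF_{q^n})\bigr)=i-1.
\]
The assumption $w_{L_U}(P)\geq 2$ therefore forces this dimension to be at least $1$.

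With these two preliminaries in place, Theorem \ref{singular} applies and yields that $f_h$ together with every one of its partial derivatives vanishes at $v_0+v_1+\cdots+v_{n-1}=v+v^\sigma+\cdots+v^{\sigma^{n-1}}$ for each $h$, which is precisely the statement of the corollary. There is no real obstacle here: the content of the corollary is just the restriction of the singular-point characterization of Theorem \ref{singular} from the ambient Segre variety $\Sigma_{rn}$ down to the $\hat\sigma$-fixed subvariety $\cV_{rn}$ that parametrizes the points of $\Pi_0\simeq \PG(r-1,q^n)$.
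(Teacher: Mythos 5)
Your proposal is correct and takes essentially the same route as the paper: the paper's proof likewise reduces to Theorem \ref{singular} by noting that weight at least $2$ forces $\dim\left(\la v,v^\sigma,\ldots,v^{\sigma^{n-1}}\ra\cap\PG(U,\F_{q^n})\right)\geq 1$, the only difference being that the paper justifies this dimension identity by explicitly invoking Lunardon's Lemma~1 on $\sigma$-invariant subspaces, whereas you cite the same identity as already recorded before Theorem \ref{singular}.
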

	\begin{proof}
		Since $\la v,v^\sigma,\ldots, v^{\sigma^{n-1}}\ra_{\F_q} \cap \PG(U,\F_q)$ is set-wise fixed by $\sigma$, by  \cite[Lemma1]{Lu1999}, $\dim \la v,v^\sigma,\ldots, v^{\sigma^{n-1}}\ra_{\F_q} \cap \PG(U,q)=\dim \la v,v^\sigma,\ldots, v^{\sigma^{n-1}}\ra_{\F_{q^n}} \cap \PG(U,q^n)$. By Theorem \ref{singular} we get the statement.
	\end{proof}

	The following result is folklore, we include it for sake of completeness.
	
	\begin{lemma}
		\label{lem}
		Let $U$ be an $\F_q$-subspace $U$ of $\F_{q^n}^t$ of dimension $m$ such that $\dim \langle U \rangle_{\F_{q^n}}=m$. If $f\in \F_{q^n}[x_0,x_1,\ldots,x_{t-1}]$ is homogeneous of degree $d\leq q$ that vanishes on $L_U$
		then $f$ vanishes also in the vectors of $\la U \ra_{\F_{q^n}}$.
	\end{lemma}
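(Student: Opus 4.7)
The plan is to reduce the lemma to a purely polynomial identity and finish by induction on $m$. Fix an $\F_q$-basis $u_1,\ldots,u_m$ of $U$; by the hypothesis $\dim_{\F_{q^n}}\la U\ra_{\F_{q^n}}=m$, this is also an $\F_{q^n}$-basis of $\la U\ra_{\F_{q^n}}$. Set
$$g(y_1,\ldots,y_m):=f\Bigl(\sum_{i=1}^{m}y_iu_i\Bigr)\in\F_{q^n}[y_1,\ldots,y_m],$$
which is homogeneous of the same degree $d\leq q$ as $f$. Since $f$ is homogeneous, its vanishing on $L_U$ is equivalent to $f(u)=0$ for every $u\in U$, i.e., to $g(a_1,\ldots,a_m)=0$ for every $(a_1,\ldots,a_m)\in\F_q^m$. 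The desired conclusion that $f$ vanishes on the vectors of $\la U\ra_{\F_{q^n}}$ is equivalent to $g$ being the zero polynomial.

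The task therefore reduces to the following claim, which I would prove by induction on $m$: a homogeneous polynomial $g\in\F_{q^n}[y_1,\ldots,y_m]$ of degree $d\leq q$ that vanishes on $\F_q^m$ is identically zero. The base case $m=1$ is immediate since $g=cy_1^d$ and $g(1)=c$. For the inductive step I would expand $g=\sum_{j=0}^{d}y_m^j\,g_j(y_1,\ldots,y_{m-1})$ with each $g_j$ homogeneous of degree $d-j$ and, for each $(a_1,\ldots,a_{m-1})\in\F_q^{m-1}$, examine the one-variable specialization $g(a_1,\ldots,a_{m-1},y_m)\in\F_{q^n}[y_m]$. This polynomial has degree at most $q$ and vanishes on the $q$ elements of $\F_q$, so it is divisible by $\prod_{a\in\F_q}(y_m-a)=y_m^q-y_m$ and, by the degree bound, is a scalar multiple of $y_m^q-y_m$.

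If $d<q$ the scalar must be $0$, whence every $g_j$ vanishes on $\F_q^{m-1}$ and the inductive hypothesis gives $g_j\equiv0$, so $g\equiv0$. The delicate case, which I expect to be the main obstacle, is $d=q$: coefficient comparison now only forces $g_j(a_1,\ldots,a_{m-1})=0$ for $j\notin\{1,q\}$ together with $g_1(a_1,\ldots,a_{m-1})=-g_q(a_1,\ldots,a_{m-1})$, while $g_q$ is homogeneous of degree $0$ (hence a constant) and $g_1$ is homogeneous of positive degree $q-1$. Evaluating the relation $g_1=-g_q$ at the origin of $\F_q^{m-1}$ then forces $g_q=0$, after which $g_1$ itself vanishes on $\F_q^{m-1}$; moreover $g_0$ vanishes on $\F_q^{m-1}$ by setting $y_m=0$ in the original hypothesis. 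Every remaining $g_j$ has degree at most $q-1<q$ in $m-1$ variables, so the inductive hypothesis finishes the argument and yields $g\equiv 0$.
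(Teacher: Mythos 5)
Your proof is correct, but it takes a genuinely different route from the paper. The paper normalizes coordinates so that $U$ becomes the canonical subgeometry of its span (first $m$ coordinates in $\F_q$, last $t-m$ coordinates zero) and then invokes the known description of the homogeneous vanishing ideal of that point set: it is generated by the Frobenius minors $x_ix_j^q-x_i^qx_j$ of degree $q+1$ together with the linear forms $x_m,\ldots,x_{t-1}$; since $f$ is homogeneous of degree $d\leq q$, its representation in this ideal cannot involve the degree-$(q+1)$ generators, so $f$ lies in $(x_m,\ldots,x_{t-1})$ and hence vanishes on $\la U\ra_{\F_{q^n}}$. You instead parametrize $U$ by $\F_q^m$ via a basis and prove from scratch, by induction on the number of variables, that a homogeneous polynomial of degree at most $q$ vanishing on $\F_q^m$ is identically zero, handling the boundary case $d=q$ by the homogeneity trick $g_1(\mathbf{0})=0$. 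Your argument is more elementary and self-contained, as it does not rely on knowing the generators of the ideal of a subgeometry (a true but nontrivial fact the paper cites implicitly); the paper's argument is shorter modulo that fact and makes the sharpness of the bound $d\leq q$ transparent, since the obstruction is exactly the degree-$(q+1)$ generators. One cosmetic slip in your write-up: in the case $d=q$ the coefficient $g_0$ has degree exactly $q$, not at most $q-1$, but this is harmless because your inductive hypothesis covers all degrees up to $q$.
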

	\begin{proof}
		Up to the action of the group $\mathrm{GL}(t,q^n)$ we can suppose that $\la U\ra_{\F_{q^n}}$ is defined by the equations $x_i=0,\, \forall i=m,m+1,\ldots,t-1$. Also, since the stabilizer of $\la U\ra_{\F_{q^n}}$ in $\mathrm{GL}(t,q^n)$ acts transitively on the $\F_q$-subspaces having dimension $m$ over $\F_q$, we can assume that the vectors of $U$ are elements of $\la U\ra_{\F_{q^n}}$ with $x_i\in \F_q\, \forall i=0,1,\ldots,m-1$. Then the ideal of polynomials vanishing on the projective set defined by $U$ is generated by $x_ix_j^q-x_i^qx_j,\,0\leq i < j \leq m-1$, $x_m,\ldots, x_{t-1}$.  Since the degree of  $f$ is at most $q$, $f$ is in the ideal generated by $x_m,\ldots,x_{t-1}$ and hence $f$ vanishes on $\la U \ra_{\F_{q^n}}$.
	\end{proof}

	Then we get the following.
	
	\begin{corollary}\label{reducibility}
		Let $L_U$ be an $\F_q$--linear set of $\PG(r-1,q^n)$ of rank $m$, with $n\leq q$, such that every point has weight at least $2$ and let $U_i:=\la U \ra _{\F_{q^n}}\cap \displaystyle \opl_{j \neq i}V_j$. Then $\la v_0,v_1,\ldots,v_{n-1}\ra \cap \la U \ra_{\F_{q^n}} \neq \{\mathbf{0}\}$ if and only if $\la v_0,v_1,\ldots,v_{n-1}\ra \cap U_i \neq \{\mathbf{0}\}$ for some $i \in \{0,1,\ldots,n-1\}$.
	\end{corollary}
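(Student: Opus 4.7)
The direction $\Leftarrow$ is immediate since $U_i\subseteq\la U\ra_{\F_{q^n}}$. For the converse, assume $\la v_0,\ldots,v_{n-1}\ra\cap\la U\ra_{\F_{q^n}}\neq\{\mathbf{0}\}$ and write a nonzero vector of this intersection as $w=\sum_i\lambda_iv_i$. If some $\lambda_i=0$ (in particular if some $v_i=\mathbf{0}$), then $w\in\bigoplus_{j\neq i}V_j\cap\la U\ra_{\F_{q^n}}=U_i$ and we are done. Otherwise, after rescaling $v_i\leftarrow\lambda_iv_i$, which preserves both the span and the condition $v_i\in V_i$, we may assume $v_0+\cdots+v_{n-1}\in\la U\ra_{\F_{q^n}}$, i.e.\ $f_h(v_0,\ldots,v_{n-1})=0$ for every $h$. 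The plan is then to invoke the second half of Theorem~\ref{singular}, which additionally requires every partial derivative of each $f_h$ to vanish at $v_0+\cdots+v_{n-1}$.

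The core claim is thus that every partial derivative of each $f_h$ vanishes on the whole $\F_{q^n}$-subspace $\la U\ra_{\F_{q^n}}\subset V'$. I first establish vanishing on the $\F_q$-subspace $U\subset\mathrm{Fix}\,\sigma$ itself: any $u\in U\setminus\{\mathbf{0}\}$ can be uniquely written as $u=v+v^\sigma+\cdots+v^{\sigma^{n-1}}$ for some $v\in V_0\setminus\{\mathbf{0}\}$, and the relation $u\in\Pi_{\la v\ra}\cap U$ forces $\la v\ra\in L_U$. Since every point of $L_U$ has weight at least $2$ by hypothesis, Corollary~\ref{cor:singular} yields the vanishing of every partial derivative of $f_h$ at $u$, hence on all of $U$.

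The extension from $U$ to $\la U\ra_{\F_{q^n}}$ is where the assumption $n\leq q$ enters. Each partial derivative of $f_h$ is a homogeneous polynomial of degree $n-1<q$ in the $rn$ coordinates of $V'$, so Lemma~\ref{lem} (applicable because $\dim_q U=\dim_{q^n}\la U\ra_{\F_{q^n}}$ is part of the standing setup) upgrades the vanishing on $U$ to vanishing on all of $\la U\ra_{\F_{q^n}}$. Evaluating at $v_0+\cdots+v_{n-1}\in\la U\ra_{\F_{q^n}}$ and then applying the second part of Theorem~\ref{singular} furnishes an index $j$ with $\la v_i:i\neq j\ra\cap\PG(U,\F_{q^n})\neq\emptyset$; the resulting vector lies in $\la v_0,\ldots,v_{n-1}\ra\cap\la U\ra_{\F_{q^n}}\cap\bigoplus_{i\neq j}V_i=\la v_0,\ldots,v_{n-1}\ra\cap U_j$, which is the desired conclusion. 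The main obstacle I anticipate is the clean identification of each $u\in U$ with a sum $v+v^\sigma+\cdots+v^{\sigma^{n-1}}$ attached to some $\la v\ra\in L_U$; once this is in place, the degree bound $n-1<q$ allows Lemma~\ref{lem} to complete the argument without surprises.
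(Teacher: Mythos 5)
Your proposal is correct and follows essentially the same route as the paper: Corollary \ref{cor:singular} gives vanishing of the partial derivatives on $U$, Lemma \ref{lem} (using $n\leq q$) extends this to $\la U\ra_{\F_{q^n}}$, and the second half of Theorem \ref{singular} then produces the index $j$ with $\la v_0,\ldots,v_{n-1}\ra\cap U_j\neq\{\mathbf{0}\}$. You merely spell out the rescaling and the degenerate case (some $\lambda_i=0$ or $v_i=\mathbf{0}$) that the paper's three-sentence proof leaves implicit.
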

	\begin{proof}
		Since every point of $L_U$ has weight at least $2$,  by Corollary \ref{cor:singular}, every partial derivative of $f_h$   vanishes on each vector of $U$, $\forall \, h$. By hypothesis,  $n\leq q$ and $\deg f_h=n$, hence we can apply Lemma \ref{lem} and so $f_h$ and every partial derivative of $f_h$  vanishes on each vector of $\la U \ra_{\F_{q^n}}$, $\forall \, h=1,2,\ldots,k$. By Theorem \ref{singular} we get the statement.
	\end{proof}
	
	We observe that by Corollary \ref{reducibility} the variety $X(\F_{q^n})$ is reducible, that is  $X(\F_{q^n})=Y_0 \cup Y_1 \cup \cdots Y_{n-1}$, where $Y_i$ is the Grassmann embedding of the elements of $\mathcal{T}$ with non zero intersection with $U_i$.
	
	We clearly have $U_i^{\sigma}=U_{i+1}$ (with the subscripts modulo $n$). Let $S$ be the largest subgroup of $\la \sigma\ra$ fixing $U_0$. Clearly we cannot have $S=\la \sigma\ra$, hence $S=\la \sigma^d \ra$ for some $d>1$ divisor of $n$. It follows that $U_i^{\sigma^d}=U_i$ $\forall \, i$.
	In particular, $U_i=\la U \ra_{\F_{q^n}}\cap \displaystyle\opl_{k \not\equiv i \pmod d} V_k$.
	
	Let $\overline{U}:=\overline{U}_0\oplus \overline{U}_1\oplus \cdots \oplus \overline{U}_{d-1}$, where $\overline{U}_i:=\la \la U\ra_{\F_{q^n}}, V_h \colon  h \not\equiv i \pmod d \ra\cap \displaystyle\opl_{k\equiv i \pmod d} V_k$.

	\begin{lemma}\label{inclusion}
		We have $\la U \ra_{\F_{q^n}}\leq \overline{U}$.
	\end{lemma}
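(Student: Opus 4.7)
The plan is to verify the inclusion elementwise, exploiting the ambient direct-sum decomposition $V'=V_0\oplus V_1\oplus \cdots\oplus V_{n-1}$ and regrouping the summands by residue class modulo $d$. Set $W_i := \displaystyle\opl_{k\equiv i \pmod d} V_k$ for $i=0,1,\ldots,d-1$, so that $V'=W_0\oplus W_1\oplus\cdots\oplus W_{d-1}$. By its very definition $\overline{U}_i\subseteq W_i$, and consequently the sum $\overline{U}=\overline{U}_0\oplus\cdots\oplus\overline{U}_{d-1}$ is automatically direct.

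Given $u\in \la U\ra_{\F_{q^n}}$, I would decompose $u=u^{(0)}+u^{(1)}+\cdots+u^{(d-1)}$ along $W_0\oplus\cdots\oplus W_{d-1}$, with each $u^{(i)}\in W_i$ forced by the direct sum, and then rewrite
\[u^{(i)}\;=\;u\;-\;\sum_{j\neq i}u^{(j)}.\]
The first term on the right lies in $\la U\ra_{\F_{q^n}}$ by assumption, while $\sum_{j\neq i}u^{(j)}$ belongs to $\displaystyle\opl_{j\neq i} W_j=\displaystyle\opl_{h\not\equiv i \pmod d} V_h$. Hence $u^{(i)}\in \la \la U\ra_{\F_{q^n}},V_h : h\not\equiv i \pmod d\ra$, and combined with $u^{(i)}\in W_i$ this is precisely the condition $u^{(i)}\in\overline{U}_i$. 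Summing over $i$ yields $u\in\overline{U}$, which is the required inclusion.

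I do not expect a real obstacle in this step: once one accepts the regrouping by residue class modulo $d$, the lemma is essentially tautological from the very definition of each $\overline{U}_i$, since $\overline{U}_i$ is designed to capture exactly the $W_i$-component of vectors that are known modulo the complementary $V_h$. The substantive content lies upstream, in Corollary \ref{reducibility} and the identification $U_i=\la U\ra_{\F_{q^n}}\cap\displaystyle\opl_{k\not\equiv i \pmod d}V_k$, which justify why grouping by residue class modulo $d$ is the geometrically correct operation; Lemma \ref{inclusion} simply records the corresponding lower bound on the $\F_{q^n}$-enlargement $\overline{U}$ of $\la U\ra_{\F_{q^n}}$, which will be paired later with a matching upper bound to control the field of linearity.
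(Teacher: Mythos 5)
Your proof is correct and is essentially the same argument as the paper's (one-line) proof: decompose each vector of $\la U\ra_{\F_{q^n}}$ into its components along $\opl_{k\equiv i \pmod d}V_k$ and observe that each component lies in $\overline{U}_i$ by the definition of $\overline{U}_i$. You have merely spelled out the details that the paper leaves implicit.
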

	\begin{proof}
		Every vector of $v\in V=\displaystyle\opl_{i=0}^{n-1}V_i$ can be written as $v=v_0+v_1+\cdots + v_{n-1}$ with $v_i$ being the projection of $v$ on $V_i$ from $\displaystyle\opl_{j\neq i}V_j$.
	\end{proof}

	\begin{theorem}
		The projection of the subspace $\overline{U} \cap Fix(\sigma)$ from $\displaystyle\opl_{j\geq 1} V_j$ on the $r$-dimensional $\F_{q^n}$-vector space $V_0$ is the $\F_{q^d}$-span of the projection of $U$.
	\end{theorem}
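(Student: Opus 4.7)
The plan is to establish the asserted equality of $\F_q$-subspaces of $V_0$ by proving the two inclusions separately.

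For the inclusion $\supseteq$, I exploit scalar closure via a diagonal action. For each $\lambda\in\F_{q^d}$ define the $\F_{q^n}$-linear map $D_\lambda\colon V'\to V'$ acting on $V_k$ as multiplication by $\lambda^{q^k}$. Since $\lambda^{q^d}=\lambda$, the scalar $\lambda^{q^k}$ depends only on $k\bmod d$, so $D_\lambda$ restricted to $\opl_{k\equiv i\pmod{d}}V_k$ is scalar multiplication by the single element $\lambda^{q^i}\in\F_{q^n}$, and therefore preserves every $\F_{q^n}$-subspace of that summand, in particular $\overline{U}_i$. Thus $D_\lambda(\overline{U})=\overline{U}$. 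A direct check gives $D_\lambda(\hat{\mathbf{x}})=\widehat{\lambda\mathbf{x}}$, so $D_\lambda$ also preserves $\mathrm{Fix}(\sigma)$ and is intertwined by $\pi$ with scalar multiplication by $\lambda$ on $V_0$. Consequently $\pi(\overline{U}\cap\mathrm{Fix}(\sigma))$ is $\F_{q^d}$-closed, and since $U\subseteq\overline{U}\cap\mathrm{Fix}(\sigma)$ (by Lemma \ref{inclusion} combined with $U\subseteq\mathrm{Fix}(\sigma)$), its projection contains $\la U\ra_{\F_{q^d}}$, the $\F_{q^d}$-span inside $V\cong V_0$.

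For the reverse inclusion $\subseteq$ I count $\F_q$-dimensions. The subspace $\overline{U}$ is $\sigma$-invariant because $\sigma(\la U\ra_{\F_{q^n}})=\la U\ra_{\F_{q^n}}$ and $\sigma$ shifts residue classes modulo $d$, forcing $\sigma(\overline{U}_i)=\overline{U}_{(i+1)\bmod d}$. By the standard descent principle (the $\F_{q^n}$-linear analogue of \cite[Lemma 1]{Lu1999}), every $\sigma$-invariant $\F_{q^n}$-subspace $W$ of $V'$ satisfies $\dim_{\F_q}(W\cap\mathrm{Fix}(\sigma))=\dim_{\F_{q^n}}W$. Applying this to $\overline{U}=\opl_{i=0}^{d-1}\overline{U}_i$, and noting that $\overline{U}_i$ is the image of $\la U\ra_{\F_{q^n}}$ under the $\F_{q^n}$-linear projection onto $\opl_{k\equiv i\pmod d}V_k$ with kernel $U_i$, yields $\dim_{\F_q}(\overline{U}\cap\mathrm{Fix}(\sigma))=d(m-u)$, where $u:=\dim_{\F_{q^n}}U_0$. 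Identifying $U_0$ with the kernel of the $\F_{q^n}$-projection $\la U\ra_{\F_{q^n}}\to V_0$, whose image is the $\F_{q^n}$-span of $U$ inside $V\cong V_0$, gives $u=m-s$, hence $\dim_{\F_q}(\overline{U}\cap\mathrm{Fix}(\sigma))=ds$ with $s$ equal to that $\F_{q^n}$-dimension.

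Since extending scalars from $\F_{q^d}$ to $\F_{q^n}$ cannot increase dimension, $\dim_{\F_{q^d}}\la U\ra_{\F_{q^d}}\ge s$; combining this with the inclusion $\la U\ra_{\F_{q^d}}\subseteq\pi(\overline{U}\cap\mathrm{Fix}(\sigma))$ already proved yields the chain
\[
d\cdot\dim_{\F_{q^d}}\la U\ra_{\F_{q^d}}\le\dim_{\F_q}\pi(\overline{U}\cap\mathrm{Fix}(\sigma))=ds\le d\cdot\dim_{\F_{q^d}}\la U\ra_{\F_{q^d}},
\]
which collapses to equality of $\F_q$-dimensions, hence of the two subspaces. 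The main obstacle I anticipate lies in the book-keeping between the two different ambient spaces in which $\F_{q^n}$-spans are taken (the paper's $\la U\ra_{\F_{q^n}}$ lives in $V'$, whereas the theorem's conclusion concerns a span inside $V\cong V_0$), and in ensuring that the descent principle is applied to the genuinely $\sigma$-invariant whole $\overline{U}$ rather than to an individual $\overline{U}_i$, which is only $\sigma^d$-invariant.
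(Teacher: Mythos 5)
Your argument is correct, and it splits into a half that mirrors the paper and a half that genuinely departs from it. For the inclusion $\supseteq$, your operator $D_\lambda$ is the paper's computation in disguise: applied to $v+v^{\sigma}+\cdots+v^{\sigma^{d-1}}$ with $v\in\opl_{j\equiv 0\ (\mathrm{mod}\ d)}V_j\cap \mathrm{Fix}(\sigma^d)$ it produces exactly the twisted combination $\lambda v+\lambda^q v^{\sigma}+\cdots+\lambda^{q^{d-1}}v^{\sigma^{d-1}}$ that the paper uses, so this direction is the same idea, merely packaged as a $\sigma$-equivariant diagonal map. The reverse inclusion is where you diverge: the paper disposes of it by asserting, essentially without proof, that $\overline{U}\cap \mathrm{Fix}(\sigma)$ is spanned by the subspaces $\la v,v^{\sigma},\ldots,v^{\sigma^{d-1}}\ra$ meeting $U$ nontrivially, whereas you replace that spanning claim by a dimension count: $\overline{U}=\opl_{i}\overline{U}_i$ is $\sigma$-invariant, Lunardon's descent lemma gives $\dim_{\F_q}(\overline{U}\cap \mathrm{Fix}(\sigma))=\dim_{\F_{q^n}}\overline{U}=d(m-\dim_{\F_{q^n}}U_0)=ds$, and sandwiching this between $d\cdot\dim_{\F_{q^d}}\la U\ra_{\F_{q^d}}$ on both sides forces equality. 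This yields a fully rigorous $\subseteq$ at the price of invoking the identity $U_0=\la U\ra_{\F_{q^n}}\cap\opl_{j\geq 1}V_j$ (which the paper does establish when it rewrites $U_i$ as $\la U\ra_{\F_{q^n}}\cap\opl_{k\not\equiv i\ (\mathrm{mod}\ d)}V_k$) and the equidimensionality of the $U_i$, which holds since they are $\sigma$-translates of one another. Two small points you should make explicit: the step $\dim_{\F_q}\pi(\overline{U}\cap \mathrm{Fix}(\sigma))=ds$ uses that the projection from $\opl_{j\geq 1}V_j$ is injective on $\mathrm{Fix}(\sigma)$ (a fixed vector with zero $V_0$-component is zero) — although the weaker bound $\leq ds$ already closes your chain — and the inequality $\dim_{\F_{q^d}}\la U\ra_{\F_{q^d}}\geq s$ deserves the one-line remark that an $\F_{q^d}$-generating set of the $\F_{q^d}$-span also generates the $\F_{q^n}$-span over $\F_{q^n}$.
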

	\begin{proof}
		The space $\overline{U} \cap Fix(\sigma)$  is spanned by the subspaces $\la v,v^{\sigma},\ldots,v^{\sigma^{d-1}}\ra$ with $v \in \left(\displaystyle \opl_{j\equiv 0 \pmod d} V_j\right)\cap Fix(\sigma^d)$ such that $\la v,v^{\sigma},\ldots,v^{\sigma^{d-1}}\ra \cap U \neq \{\mathbf{0}\}$. For every $\lambda \in \F_{q^d}$, $\lambda v+\lambda^q v^{\sigma}+\cdots + \lambda^{q^{d-1}}v^{\sigma^{d-1}}$ is still in $\overline{U} \cap Fix(\sigma)$, hence the projection of $\overline{U} \cap Fix(\sigma)$  from $\displaystyle\opl_{j\geq 1} V_j$ on $V_0$ is the $\F_{q^d}$-span of $U$.
	\end{proof}
	
	We need the following technical lemma.
	
	\begin{lemma}\label{span}
		Let $v=v_0+v_1+\cdots + v_{n-1} \in V$ with $v_i \in V_i$. If the dimension of $\la v^{\sigma^i} : i=0,1,\ldots,n-1\ra$ over $\F_{q^n}$ is $0 < k<n$, then $v_{k+i}$ is a non-trivial linear combination of  $\{v_0^{\sigma^{k+i}}, v_1^{\sigma^{k+i-1}},\ldots,v_{k-1}^{\sigma^{i+1}}\}$ $\forall \, i \in \{0,1,\ldots, n-k-1\}$. If there is a $v_j =\mathbf{0}$, then there exists $h \in \{0,1,\ldots,k-1\}$ such that $v_{k+i} \in \la v_l^{\sigma^{k+i-l}} : l=0,1,\ldots,k-1,\,l \neq h \ra$ $\forall \, i \in \{0,1,\ldots, n-k-1\}$ and $v_h^{\sigma^{k-h-1}} \in \la v_l^{\sigma^{k-l-1}} : l=0,1,\ldots,k-1,\,l \neq h  \ra$.
	\end{lemma}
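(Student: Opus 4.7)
The plan is to exploit that $\sigma$ is an invertible semilinear map on $V$, so for every $a \in \Z$ the tuple $\{v^{\sigma^a}, v^{\sigma^{a+1}}, \ldots, v^{\sigma^{a+k-1}}\}$ is a basis of $W := \la v^{\sigma^i} : i = 0, \ldots, n-1 \ra$, being the image under the bijection $\sigma^a$ of any basis of $W$. Combined with the component projection $\pi_m : V \to V_m$ along $\bigoplus_{j \neq m} V_j$, which sends $v^{\sigma^l}$ to $v_{m-l}^{\sigma^l}$ (indices mod $n$), the whole lemma reduces to expanding $v$ in a suitable basis of $W$ and projecting.

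For the first assertion, for each $i \in \{0, 1, \ldots, n-k-1\}$ I choose the basis $\{v^{\sigma^{i+1}}, \ldots, v^{\sigma^{i+k}}\}$ of $W$ and expand $v = \sum_{l=0}^{k-1} \alpha_l v^{\sigma^{i+l+1}}$. Since $v \neq \mathbf{0}$ (else $W$ would be trivial), not all of the $\alpha_l$ vanish. Applying $\pi_{k+i}$ turns the left-hand side into $v_{k+i}$ and the right-hand side, after reindexing $l \mapsto k-1-l$, into a linear combination of $\{v_{l}^{\sigma^{k+i-l}} : l = 0, \ldots, k-1\}$ with at least one non-zero coefficient; since $k+i-l \in \{i+1, \ldots, k+i\} \subseteq \{1, \ldots, n-1\}$ no wraparound occurs in the $\sigma$-powers, so this is exactly the spanning set claimed.

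For the second assertion, suppose $v_j = \mathbf{0}$ for some $j$. Projecting the basis $\{v^{\sigma^{j-k+1}}, \ldots, v^{\sigma^j}\}$ of $W$ onto $V_j$ produces the spanning set $\{v_l^{\sigma^{(j-l) \bmod n}} : l = 0, \ldots, k-1\}$ of $\pi_j(W)$. The hypothesis gives $\pi_j(v) = v_j = \mathbf{0}$, so $v \in W \cap \ker \pi_j$ forces $\dim \pi_j(W) \leq k-1$, and the $k$ spanning vectors must admit a non-trivial relation $\sum_l \beta_l v_l^{\sigma^{(j-l) \bmod n}} = \mathbf{0}$ with some $\beta_h \neq 0$. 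I take this $h \in \{0, \ldots, k-1\}$. Applying $\sigma^{(k-1-j) \bmod n}$ transports the relation to one in $V_{k-1}$ among $\{v_l^{\sigma^{k-l-1}}\}_{l=0}^{k-1}$ (the exponents $k-l-1 \in \{0, \ldots, k-1\}$ require no mod), and picking off the $l=h$ term yields the second conclusion. Analogously, applying $\sigma^{(k+i-j) \bmod n}$ produces a non-trivial relation in $V_{k+i}$, and using it to substitute out $v_h^{\sigma^{k+i-h}}$ in the expression of $v_{k+i}$ obtained in the first part shows $v_{k+i} \in \la v_l^{\sigma^{k+i-l}} : l = 0, \ldots, k-1,\, l \neq h \ra$ for every $i$.

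The main obstacle I expect is purely combinatorial/notational: one must verify that for the ranges of $l$ and $i$ involved the integer exponents $k-l-1$ and $k+i-l$ really lie in $\{0, \ldots, n-1\}$ and hence index the vectors as they are literally written in the statement, and one must carry the $q^t$-th-power twists of the coefficients introduced by the semilinearity of $\sigma^t$ through the argument without spoiling the non-vanishing of $\beta_h$.
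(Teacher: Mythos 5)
Your overall strategy coincides with the paper's: expand $v$ in a set of $k$ consecutive $\sigma$-powers of itself, project onto a component $V_m$ along the other $V_j$'s, and transport relations between components by twisting with powers of $\sigma$. The computational parts are correct — the formula $\pi_m(v^{\sigma^l})=v_{m-l}^{\sigma^l}$, the reindexing, the observation that $v\in W\cap\ker\pi_j$ forces $\dim\pi_j(W)\le k-1$ when $v_j=\mathbf{0}$, and the fact that a single relation, twisted by various powers of $\sigma$, supplies the same index $h$ for all $i$; indeed your treatment of the second assertion is more explicit than the paper's, which splits into the cases $j\le k-1$ and $j\ge k$ and is terse about the uniformity of $h$.

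However, there is a genuine gap at the first step. You assert that for every $a$ the tuple $\{v^{\sigma^a},\ldots,v^{\sigma^{a+k-1}}\}$ is a basis of $W=\la v^{\sigma^i}: i=0,\ldots,n-1\ra$, ``being the image under the bijection $\sigma^a$ of any basis of $W$.'' The image under $\sigma^a$ of an arbitrary basis of $W$ is a basis of $W^{\sigma^a}=W$, but it is not the specific consecutive tuple you need; for that you must first know that $\{v,v^{\sigma},\ldots,v^{\sigma^{k-1}}\}$ itself is a basis, and this is not a formal consequence of $\dim_{\F_{q^n}}W=k$ together with the invertibility of $\sigma$. It is exactly the point on which the paper spends the first half of its proof: one shows by induction that $\{v,v^{\sigma},\ldots,v^{\sigma^{k-1}}\}$ is independent, the key step being that if $v^{\sigma^i}\in\la v,\ldots,v^{\sigma^{i-1}}\ra$ for some $i\le k-1$, then applying $\sigma$ repeatedly yields $v^{\sigma^j}\in\la v,\ldots,v^{\sigma^{i-1}}\ra$ for all $j$, forcing $\dim W\le i<k$, a contradiction. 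Until this is supplied, your expansion $v=\sum_{l=0}^{k-1}\alpha_l v^{\sigma^{i+l+1}}$ is unjustified, and with it the whole first assertion (the second assertion, by contrast, only needs the $k$ vectors $v_l^{\sigma^{(j-l)\bmod n}}$ to lie in the at most $(k-1)$-dimensional space $\pi_j(W)$, so that part would survive). Once the independence of $k$ consecutive powers is proved, your argument goes through.
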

	\begin{proof}
		$k>0$ and hence $v$ is not the null-vector. If $k=1$ then then the first part is trivial and it is easy to see that $v_j={\bf 0}$ would imply that $v$ is the null-vector, contradicting the fact that $k>0$. If $k>1$ then $\{v,v^{\sigma}\}$ is independent since otherwise $k=1$.
		
		To prove the first part first we prove by induction on $i$ that $\{v,v^{\sigma},\ldots,v^{\sigma^{i}}\}$ is independent for $1 \leq i \leq k-1$. For $i=1$ we have already seen this. Suppose that $k>2$ and the statement true for $i-1>1$.  If  $v^{\sigma^i} \in \la v,v^{\sigma},\ldots,v^{\sigma^{i-1}} \ra$, then  $v^{\sigma^{i+1}} \in \la v^{\sigma},v^{\sigma^2},\ldots,v^{\sigma^{i}} \ra=\la v,v^{\sigma},\ldots,v^{\sigma^{i-1}} \ra$  and on turn $v^{\sigma^j} \in \la v,v^{\sigma},\ldots,v^{\sigma^{i-1}} \ra$ $\forall \, j > i-1$, a contradiction since $i<k$.
		Hence $\{v,v^{\sigma},\ldots,v^{\sigma^{k-1}}\}$ is independent.
		Since $v^{\sigma^i}=v_{n-i}^{\sigma^i}+v_{n-i+1}^{\sigma^{i}}+\cdots + v_{n-i-1}^{\sigma^i}$,  by $v^{\sigma^i} \in  \la v,v^{\sigma},\ldots,v^{\sigma^{k-1}} \ra$  and projecting from $\displaystyle \opl_{h \neq k-1}V_h$ on $V_{k-1}$, we get that $v_{n-i+k-1}^{\sigma^i} \in \la v_{k-1}, v_{k-2}^{\sigma},\ldots, v_0^{\sigma^{k-1}} \ra$ $\forall \, i \geq k$.  This proves the first part of the statement. Suppose that  $v_j =\mathbf{0}$ for some $j$. If $j \leq k-1$, then take $h=j$ and we get the assertion. If $j \geq k$, then it means that $\{v_0^{\sigma^{k-1}}, v_1^{\sigma^{k-2}},\ldots,v_{k-1} \}$ is linearly dependent and hence $v_h^{\sigma^{k-h-1}} \in \la v_l^{\sigma^{k-l-1}} : l=0,1,\ldots,k-1,\,l \neq h  \ra$ for some $h \leq k-1$ and this completes the proof.
	\end{proof}

	\begin{theorem}\label{main}
		Let $r=2$ or $\la L_U \ra=\PG(r-1,q^n)$ for some $r>2$ and such that $m>(r-1)d$. If $n\leq q$ then $U$ and $\overline{U}\cap Fix(\sigma)$ define the same linear set.
	\end{theorem}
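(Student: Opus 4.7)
The plan is to prove the two inclusions $L_U\subseteq L_{\overline U\cap Fix(\sigma)}$ and $L_{\overline U\cap Fix(\sigma)}\subseteq L_U$ separately. The first is immediate from Lemma \ref{inclusion}: since $\la U\ra_{\F_{q^n}}\subseteq\overline U$ and $U\subseteq Fix(\sigma)$, we have $U\subseteq\overline U\cap Fix(\sigma)$, hence $L_U\subseteq L_{\overline U\cap Fix(\sigma)}$. For the reverse inclusion I invoke the preceding theorem, which identifies the projection of $\overline U\cap Fix(\sigma)$ from $\opl_{j\ge 1}V_j$ onto $V_0$ with $\la U\ra_{\F_{q^d}}$; via the identification $\Pi_0\cong\PG(V,\F_{q^n})$, this reduces the statement to the equality $L_U=L_{\la U\ra_{\F_{q^d}}}$.

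Take $P=\la v\ra_{\F_{q^n}}\in L_{\overline U\cap Fix(\sigma)}$ with $v\in V_0\setminus\{\mathbf 0\}$. Both $\la v,v^\sigma,\ldots,v^{\sigma^{n-1}}\ra$ and $\overline U$ (and likewise $\la U\ra_{\F_{q^n}}$) are $\sigma$-invariant $\F_{q^n}$-subspaces, so the lemma of Lunardon cited in the proof of Corollary \ref{cor:singular}, applied to their $\sigma$-invariant intersections, lets one pass freely between the fixed-point and the ambient conditions: $P\in L_{\overline U\cap Fix(\sigma)}$ becomes $\la v,v^\sigma,\ldots,v^{\sigma^{n-1}}\ra\cap\overline U\neq\{\mathbf 0\}$, and $P\in L_U$ becomes $\la v,v^\sigma,\ldots,v^{\sigma^{n-1}}\ra\cap\la U\ra_{\F_{q^n}}\neq\{\mathbf 0\}$. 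By Corollary \ref{reducibility}, which invokes $n\le q$ and the weight-at-least-two hypothesis, the latter holds iff $\la v,v^\sigma,\ldots,v^{\sigma^{n-1}}\ra\cap U_i\neq\{\mathbf 0\}$ for some $i$.

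The core step is then to start from a nonzero $w=\sum_i\mu_i v^{\sigma^i}\in\la v,\ldots,v^{\sigma^{n-1}}\ra\cap\overline U$ and manufacture a nonzero element of $\la v,\ldots,v^{\sigma^{n-1}}\ra\cap U_i$ for some $i$. Decomposing $w=\sum_{c=0}^{d-1}w_c$ along $\overline U=\opl_c\overline U_c$, each slice $w_c=\sum_{i\equiv c\pmod d}\mu_i v^{\sigma^i}$ belongs to $\overline U_c=\pi_c(\la U\ra_{\F_{q^n}})$ and so is the $\pi_c$-image of some $u_c\in\la U\ra_{\F_{q^n}}$, unique modulo the kernel $U_c\subseteq\opl_{k\not\equiv c\pmod d}V_k$. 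My plan is to exploit this freedom to choose the $u_c$ simultaneously so that their sum $u:=\sum_c u_c$ actually lies in $\la v,v^\sigma,\ldots,v^{\sigma^{n-1}}\ra$, i.e.\ so that the $V_j$-component of $u$ is an $\F_{q^n}$-multiple of $v^{\sigma^j}$ for each $j$. Such a nonzero $u$ would be an element of $\la U\ra_{\F_{q^n}}\cap\la v,v^\sigma,\ldots,v^{\sigma^{n-1}}\ra$, yielding $P\in L_U$ as required.

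The main obstacle is carrying out these simultaneous adjustments, and this is where the case split in the hypothesis enters. For $r=2$ the problem degenerates to a linear set on $\PG(1,q^n)$, where Result \ref{CsMP} applies directly. For $r>2$ with $\la L_U\ra=\PG(r-1,q^n)$ and $m>(r-1)d$, I would invoke Lemma \ref{span} within each residue class $\pmod d$ to rule out the short-$\sigma$-orbit degeneracies which would otherwise obstruct the matching; the spanning assumption forbids concentration of $U$ in a proper hyperplane, and the bound $m>(r-1)d$ guarantees enough flexibility in each kernel $U_c$ to perform the required $\F_{q^n}$-matching of $V_j$-components coherently across all residue classes, producing the desired $u$.
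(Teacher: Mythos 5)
Your easy inclusion and the reduction of the converse to exhibiting a nonzero vector of $\la v,v^{\sigma},\ldots,v^{\sigma^{n-1}}\ra\cap\la U\ra_{\F_{q^n}}$ (via Lunardon's lemma and Corollary \ref{reducibility}) agree with the paper, but the two places where the hypotheses actually get used are missing. In the main case the paper does not ``simultaneously adjust'' preimages $u_c$: it fixes a single $\sigma^d$-fixed preimage $u'$ of $u+u^{\sigma^d}+\cdots+u^{\sigma^{n-d}}$, notes that the vectors of $\la U\ra_{\F_{q^n}}$ whose $V_0$-component is proportional to $u$ form the subspace $\la u',U_0\ra$ of codimension $r-1$ (non-degeneracy of $L_U$ gives $\dim U_0=m-r$), and intersects the $d$ conjugates $\la u',U_0\ra^{\sigma^c}$: Grassmann gives dimension at least $m-d(r-1)>0$, the intersection is $\sigma$-invariant, so it contains a nonzero $\sigma$-fixed vector, which lies in $U$ and, being $\sigma$-fixed with $V_0$-component proportional to $u$, lies in $\la u,\ldots,u^{\sigma^{n-1}}\ra$. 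Your sentence that ``$m>(r-1)d$ guarantees enough flexibility to perform the required matching coherently'' names the hypothesis but never performs this count; since the count is the entire content of this case, the assertion of coherent matching is a gap, not a proof. (Lemma \ref{span} plays no role in this case.)

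More seriously, your disposal of $r=2$ is incorrect. Result \ref{CsMP} applies only to linear sets of rank exactly $n$ in $\PG(1,q^n)$ (it rests on the direction theorem for the graph of a $q$-polynomial on all of $\F_{q^n}$); for $m<n$ it says nothing, and the subcases $m=d$ and $m<d$ are precisely where the paper works hardest. There the argument takes a $\sigma^d$-fixed vector in $\bigcap_{c=0}^{m-2}\la u',U_0\ra^{\sigma^c}$, pads the uncontrolled residue classes with auxiliary vectors $v_j\in V_j$, applies Corollary \ref{reducibility} to force intersection with some $U_i$, and finally uses Lemma \ref{span} to show the auxiliary components must be multiples of the $u^{\sigma^j}$. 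None of this is reproduced or replaced in your proposal, so the case $r=2$, $m\le d$ is a genuine hole.
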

	\begin{proof}
		By Lemma \ref{inclusion}, the linear set defined by $U$ is contained in the one defined by $\overline{U}\cap Fix(\sigma)$. Viceversa, let $u+u^{\sigma}+\cdots + u^{\sigma^{n-1}} \in \overline{U}=\overline{U}_0\oplus \overline{U}_1\oplus \cdots \oplus \overline{U}_{d-1}$, with $u \in V_{0}\setminus\{\mathbf{0}\}$ then there exists $u'\in \la U \ra_{\F_{q^n}}\setminus \{\mathbf{0}\}$  such that the projection of $u'$ from $\displaystyle \opl_{h \not\equiv 0 \pmod d}V_h$ on $\displaystyle\opl_{k\equiv 0 \pmod d} V_k$ is $u+u^{\sigma^d}+\cdots + u^{\sigma^{n-d}}$, indeed, choose any non-zero $u' \in \la u+u^{\sigma^d}+\cdots + u^{\sigma^{n-d}}, V_h,  h \not\equiv 0 \pmod d \ra \cap \la U\ra_{\F_{q^n}}$.
		Since $\la u+u^{\sigma^d}+\cdots + u^{\sigma^{n-d}}, V_h,  h \not\equiv 0 \pmod d \ra \cap \la U\ra_{\F_{q^n}}$ is set-wise fixed by $\sigma^{d}$, $\dim_{\F_{q^d}} \la u+u^{\sigma^d}+\cdots + u^{\sigma^{n-d}}, V_h,  h \not\equiv 0 \pmod d \ra \cap \la U\ra_{\F_{q^n}} \cap Fix(\sigma^d)=\dim_{\F_{q^n}} \la u+u^{\sigma^d}+\cdots + u^{\sigma^{n-d}}, V_h,  h \not\equiv 0 \pmod d \ra \cap \la U\ra_{\F_{q^n}}$ (\cite[Lemma 1]{Lu1999}), hence we can take  $u' \in \la u+u^{\sigma^d}+\cdots + u^{\sigma^{n-d}}, V_h,  h \not\equiv 0 \pmod d \ra \cap \la U\ra_{\F_{q^n}}\cap Fix(\sigma^d)$. Therefore, there are $u_i \in V_{i},i=1,2,\ldots, d-1,$ such that $u'$ is a non-zero vector of
		$\la u, u_1,u_2,\ldots, u_{d-1}, u^{\sigma^d}, u_1^{\sigma^d},u_2^{\sigma^d},\ldots, u_{d-1}^{\sigma^d},\ldots, u^{\sigma^{n-d}}, u_1^{\sigma^{n-d}},u_2^{\sigma^{n-d}},\ldots, u_{d-1}^{\sigma^{n-d}}\ra \cap \la U \ra_{\F_{q^n}}$ (note that $u' \notin U_0$).
		\bigskip
		
		A vector of $\la u', U_0 \ra \setminus U_0$ is also projected on $u+u^{\sigma^d}+\cdots + u^{\sigma^{n-d}}$ (with the same vertex and axis as above). Since $L_{U}$ is non-degenerate, that is, $L_{U}$ spans $\PG(r-1,q^n)$, the projection of $\la U\ra_{\F_{q^n}}$ from $\displaystyle\opl_{i\geq 1} V_i$ on
		$V_0$ is $V_0$, hence the dimension of $U_0$ is $m-r$. The subspace  $\la u', U_0 \ra $ has hence codimension $r-1$ in $\la U\ra_{\F_{q^n}}$.
		
		\medskip
		
		If $d(r-1)  < m$, then there is at least a non-zero vector in $\la u', U_0 \ra \cap \la u', U_0 \ra^{\sigma} \cap \cdots \cap \la u', U_0 \ra^{\sigma^{d-1}} $ that is in $\la u^{\sigma^i},i=0,1,\ldots,n-1\ra \cap U$. Then this intersection is not the zero-vector and hence $u+u^{\sigma}+\ldots+u^{\sigma^{n-1}}$ defines a point of $L_U$.
		
		\medskip
		
		Suppose now that $r=2$ and $m \leq d$. If $m=d$ then there is at least a non-zero vector in $\la u', U_0 \ra \cap \la u', U_0 \ra^{\sigma} \cap \cdots \cap \la u', U_0 \ra^{\sigma^{d-2}} $, hence $\la u^{\sigma^i},i \not \equiv d-1 \pmod d, w_j, j \equiv d-1 \pmod d \ra \cap \la U \ra_{\F_{q^n}}\neq \{\mathbf{0}\}$ for some $w_j \in V_j$. By Corollary \ref{reducibility}, we get $\la u^{\sigma^i},i \not\equiv d-1 \pmod d, w_j, j \equiv d-1 \pmod d \ra \cap U_i \neq \{\mathbf{0}\}$ for some $i=0,1,\ldots,d-1$. Since in $\la u', U_0 \ra \cap \la u', U_0 \ra^{\sigma} \cap \cdots \cap \la u', U_0 \ra^{\sigma^{d-2}} $ we have a vector projected on $\displaystyle \sum_{i \not\equiv d-1 \mod d} u^{\sigma^i}$  from $\displaystyle \opl_{j \equiv d-1 \pmod d} V_j$, such a vector is either in $U_{d-1}$ or it does not belong to any $U_j$. In the latter case, $\la u^{\sigma^i},i \not\equiv d-1 \pmod d, w_j, j \equiv d-1 \pmod d \ra$ has a vector not belonging to any $U_j$ and a vector belonging to $U_i$, hence it contains a line of $\la U \ra_{\F_{q^n}}$, hence  such a line has a point on every hyperplane of  $\la u^{\sigma^i},i \not\equiv d-1 \pmod d, w_j, j \equiv d-1 \pmod d \ra$ and so a non-zero vector of every $U_i$. Therefore,  $\la u^{\sigma^i} : i \not\equiv d-1 \ra \cap \PG(U,\F_{q^n}) \neq \emptyset$, so $\la u^{\sigma^i} : i=0,1,\ldots,n-1  \ra \cap \PG(U,\F_{q^n}) \neq \emptyset$ and finally $\la u^{\sigma^i} : i=0,1,\ldots,n-1  \ra \cap \PG(U,\F_q) \neq \emptyset$.
		
		Suppose that $m<d$. Then  $\la u', U_0 \ra \cap \la u', U_0 \ra^{\sigma} \cap \cdots \cap \la u', U_0 \ra^{\sigma^{m-2}} \neq \{\mathbf{0}\}$ and since  $\la u', U_0 \ra $ is fixed by $\sigma^d$,  $\la u', U_0 \ra \cap \la u', U_0 \ra^{\sigma} \cap \cdots \cap \la u', U_0 \ra^{\sigma^{m-2}} \cap Fix(\sigma^d)\neq \{\mathbf{0}\}$, so there exists $v_j \in V_j$ with $j=m-1,m,\ldots, d-1$ such that
		
		\noindent$\la u^{\sigma^i},v_j, u^{\sigma^{d+i}},v_{j}^{\sigma^d},\ldots, u^{\sigma^{n-d+i}},v_{j}^{\sigma^{n-d}} \colon i=0,1,\ldots,m-2,\,j=m-1,m,\ldots, d-1 \ra \cap \la U \ra_{\F_{q^n}} \neq \{ \mathbf{0} \}$.

		By Corollary \ref{reducibility}, we get that
		
		\noindent$\la u^{\sigma^i},v_j, u^{\sigma^{d+i}},v_{j}^{\sigma^d},\ldots, u^{\sigma^{n-d+i}},v_{j}^{\sigma^{n-d}} \colon i=0,1,\ldots,m-2,\,j=m-1,m,\ldots, d-1 \ra \cap \la U_i \ra_{\F_{q^n}} \neq \{\mathbf{0}\}$ for some $i \in \{0,1,\ldots,d-1\}$. Again,
		
		\noindent$\la u^{\sigma^i},v_j, u^{\sigma^{d+i}},v_{j}^{\sigma^d},\ldots, u^{\sigma^{n-d+i}},v_{j}^{\sigma^{n-d}} \colon i=0,1,\ldots,m-2,j=m-1,m,\ldots, d-1 \ra\cap \la U_i \ra_{\F_{q^n}} $ is set-wise fixed by $\sigma^d$, so we can take a non-zero vector in
		
		\noindent$\la u^{\sigma^i},v_j, u^{\sigma^{d+i}},v_{j}^{\sigma^d},\ldots, u^{\sigma^{n-d+i}},v_{j}^{\sigma^{n-d}} \colon i=0,1,\ldots,m-2,j=m-1,m,\ldots, d-1 \ra \cap \la U_i \ra_{\F_{q^n}} \cap Fix (\sigma^d)$.

		Hence, by substituting $v_i$ by a multiple, we can take $u''$ such that
		
		\noindent$u''= \alpha_0u+\alpha_1u^{\sigma}+\cdots + \alpha_{m-2}u^{\sigma^{m-2}}+ \alpha_{m-1}v_{m-1}+\alpha_{m}v_m+\cdots +\alpha_{d-1}v_{d-1}+ \alpha_0^{\sigma^d}u^{\sigma^d}+ \alpha_1^{\sigma^d}u^{\sigma^{d+1}}+\cdots + \alpha_{d+m-2}^{\sigma^d}u^{\sigma^{d+m-2}}+ \alpha_{m-1}^{\sigma^d}v_{m-1}^{\sigma^d}+\alpha_m^{\sigma^d}v_m^{\sigma^d}+\cdots +\alpha_{d-1}^{\sigma^d}v_{d-1}^{\sigma^d}+ \cdots +\alpha_{0}^{\sigma^{n-d}} u^{\sigma^{n-d}}+ \alpha_{1}^{\sigma^{n-d+1}}u^{\sigma^{n-d+1}}+\cdots + \alpha_{m-2}^{\sigma^{n-d}}u^{\sigma^{n-d+m-2}}+ \alpha_{m-1}^{\sigma^{n-d}}v_{m-1}^{\sigma^ {n-d}}+\alpha_m^{\sigma^{n-d}}v_m^{\sigma^{n-d}}+\cdots +\alpha_{d-1}^{\sigma^{n-d}}v_{d-1}^{\sigma^{n-d}}\in \la U\ra_{\F_{q^n}}$ such that there is at least a $\alpha_j=0$ for some $j \in \{m-1,m,\ldots,d-1\}$.  Since $k$ from Lemma \ref{span} applied to $u''$ is at most $m<d \leq n$, we get that $\alpha_hv_h$ is a multiple of $u^{\sigma^j}$ $\forall \, h \in \{m-1,m,\ldots,d-1\}$. Therefore $\la u^{\sigma^i} : i=0,1,\ldots,n-1  \ra \cap \PG(U,\F_q) \neq \emptyset$.
	\end{proof}

	We can finally prove the general case.
	
	\begin{theorem}\label{final}
		Let $n\leq q$. The $\F_{q}$-subspaces $U$ and $\overline{U}\cap Fix(\sigma)$ define the same linear set in $\PG(r-1,q^n)$ for any $m$ and $r$.
	\end{theorem}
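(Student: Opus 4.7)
The plan is to remove the two remaining hypotheses of Theorem~\ref{main}, namely that $\la L_U\ra_{\F_{q^n}}=\PG(r-1,q^n)$ and $m>(r-1)d$, by two successive reductions. Throughout I would treat Theorem~\ref{main} as a black box and induct on $r$, with base case $r=2$ already settled there.

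First I would reduce to the spanning case. Suppose $\la L_U\ra_{\F_{q^n}}$ is a proper projective subspace of $\Pi_0$ of dimension $r'-1<r-1$. After an $\F_{q^n}$-change of coordinates in $\Pi_0$ I may assume this span is coordinatized by $X_0,\ldots,X_{r'-1}$, so that $\la U\ra_{\F_{q^n}}$ lies inside the $\sigma$-stable subspace $V^{*}=\bigoplus_{i=0}^{n-1}V_i^{*}$, where $V_i^{*}\subset V_i$ is spanned by $\mathbf{e}_{j+ir}$ for $j=0,\ldots,r'-1$. A direct check from the definitions shows that $U_i$, $\overline{U}_i$, $\overline{U}$ and the integer $d$ all coincide whether computed in $V$ or in $V^{*}$; the key observation is that $\overline{U}_i$ is the image of $\la U\ra_{\F_{q^n}}$ under the projection onto $\bigoplus_{k\equiv i\bmod d}V_k$ along $\bigoplus_{h\not\equiv i\bmod d}V_h$, and this projection stays inside $V^{*}$. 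Thus the statement in $\PG(r-1,q^n)$ follows from the statement in the strictly smaller $\PG(r'-1,q^n)$, and by induction on $r$ I may assume from now on that $L_U$ spans $\PG(r-1,q^n)$.

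In the spanning case, Theorem~\ref{main} handles $r=2$ and $m>(r-1)d$, so the remaining case is $r>2$, $\la L_U\ra=\PG(r-1,q^n)$, and $m\leq(r-1)d$, which I expect to be the main obstacle. The crude codimension count used in Theorem~\ref{main} --- each translate $\la u',U_0\ra^{\sigma^{i}}$ has codimension $r-1$ in $\la U\ra_{\F_{q^n}}$, so intersecting $d$ of them yields something nonzero whenever $d(r-1)<m$ --- now fails. My plan is to adapt the $r=2$, $m\leq d$ sub-case of Theorem~\ref{main}: pick $k$ maximal with $\bigcap_{i=0}^{k-1}\la u',U_0\ra^{\sigma^{i}}\neq\{\mathbf{0}\}$, choose a nonzero $u''$ in this intersection that is fixed by $\sigma^{d}$, use Corollary~\ref{reducibility} to locate $u''$ inside some $U_j$, and finally apply Lemma~\ref{span} to the $\sigma$-orbit of $u''$ in order to produce an element of $U$ on the $\F_{q^n}$-line through $u+u^{\sigma}+\cdots+u^{\sigma^{n-1}}$. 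The hardest point is that for $r>2$ the vector $u''$ has many more degrees of freedom than in the $r=2$ case, so the combinatorics governing which $U_j$ absorbs $u''$ and which $V_i$-components of $u''$ vanish requires a more intricate case analysis on top of Lemma~\ref{span}; closing this gap while keeping all $\sigma$-orbit constraints consistent is where the proof will demand the most care.
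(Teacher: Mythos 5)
Your reduction to the spanning case is fine (the observation that $\overline{U}_i$ is the projection of $\la U\ra_{\F_{q^n}}$ onto $\bigoplus_{k\equiv i\pmod d}V_k$ along the complementary sum, hence stays inside the span of $\la U\ra_{\F_{q^n}}$, is correct), but the argument stops exactly where the real work begins. For $r>2$, $\la L_U\ra=\PG(r-1,q^n)$ and $m\leq(r-1)d$ you only announce a plan --- adapt the $r=2$, $m\leq d$ sub-case of Theorem~\ref{main}, with ``a more intricate case analysis on top of Lemma~\ref{span}'' --- and you yourself flag that closing this is the hard part. That is a genuine gap: no argument is given, and the combinatorics of Lemma~\ref{span} does not transfer readily to $r>2$, since there the $V_i$-components of the vector $u''$ live in $r$-dimensional spaces and need no longer be scalar multiples of the powers $u^{\sigma^i}$ of a single vector, which is exactly what the $r=2$ sub-case exploits.

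The paper does not attack this case at all; it shows it is vacuous. Inducting on $r$, one picks a hyperplane $H$ with $\la L_U\cap H\ra=H$; by the induction hypothesis the section $L_{U\cap V'}$ is an $\F_{q^d}$-linear set spanning $H$, so it contains a subgeometry $\PG(r-2,q^d)$, whence the underlying subspace $W'$ satisfies $\dim_q W'\geq(r-2)d+w$ (with $w$ its minimum weight). Adding a point $Q\in L_U\setminus H$ and the $\F_q$-subspace of dimension at least $d+1$ needed on the line $\la P,Q\ra$ yields $\dim_q U\geq(r-2)d+w+d+1-w=(r-1)d+1>(r-1)d$, so Theorem~\ref{main} applies verbatim. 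The missing idea in your proposal is precisely this dimension count proving that the troublesome case $r>2$, $m\leq(r-1)d$ cannot occur under the spanning hypothesis; without it, or a fully worked-out substitute for the case analysis you defer, the proof is incomplete.
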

	\begin{proof}
		We prove the result by induction on $r$. For $r=2$, the result is proven in Theorem \ref{main}. Suppose that is true for $r-1$ and let $U$ such that $\la L_U \ra= \PG(r-1,q^n)$. If $\la L_U \cap H \ra$ is degenerate, i.e. does not span $H$, for every hyperplane  $H$ of $\PG(r-1,q^n)$, then  $\la L_U \ra \subsetneq \PG(r-1,q^n)$, a contradiction. Then there exists a hyperplane $H$ such that $\la L_U \cap H \ra= H$. The linear set $L_U\cap H$ is $L_W$, where $W=U\cap V'$ and $V'$ is the $\F_{q^n}$-vector space inducing $H$. Then $  W=U\cap V'$ in $V(rn,q^n)$ can be seen as $\la W \ra_{\F_{q^n}}= \la U \ra_{\F_{q^n}}\cap  V'\oplus V'^{\sigma} \oplus \cdots V'^{\sigma^{n-1}}= U_0+U_1+\cdots U_{d-1} \cap  V'\oplus V'^{\sigma} \oplus \cdots \oplus V'^{\sigma^{n-1}}=W_0+W_1 + \cdots + W_{d-1}$, with $W_i=\la W \ra_{\F_{q^n}}\cap \displaystyle\opl_{k \not\equiv i \pmod d} V_k$. We have assumed the Theorem true for $V'$, hence $L_W=L_{\la W\ra_{\F_{q^d}}}$. Since $\la L_{\la W\ra_{\F_{q^d}}}\ra =H$, $L_{\la W\ra_{\F_{q^d}}}$ contains at least a subgeometry $\cong \PG(r-2,q^d)$, hence $L_W$ is an $\F_q$-linear set containing every point of a $\PG(r-2,q^d)$, that is, there exists a subspace $W'$ of $W$ such that $L_{W'}=\PG(r-2,q^d)$. Let $w$ be the minimum weight of $L_{W'}$ and let $P$ be a point of weight $w$. Since the minimum weight of $L_{W'}$ is $w$, a subspace of codimension $w-1$ of $W'$ will determine the same set of points, hence $\dim_q W' \geq (r-2)d+w$. Let $Q$ be a point of $L_U$ not in $H$. On $\la P,Q \ra \cap U$ we also must have an $\F_q$-linear set containing the points on a line $\cong \PG(1,q^d)$, hence an $\F_q$-subspace of dimension at least $d+1$. Therefore, $\dim_q U \geq (r-2)d+w+d+1-w=(r-1)d+1 >(r-1)d$, hence we can apply Theorem \ref{main} and the result is proved.
	\end{proof}

	In the following, we include another proof of Theorem \ref{thm:main1} as an easy corollary of Theorem \ref{final}, but of course it is valid under the hypothesis $q\geq n$, which not required in Theorem \ref{thm:main1}.

	\begin{corollary}
		If $m$ is a multiple of $n$, then $U$ is an $\F_{q^d}$-vector space with $d$  the minimum weight.
	\end{corollary}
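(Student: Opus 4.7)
The plan is to deduce the statement as a short consequence of Theorem \ref{final}.

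First, if $d=1$ the claim is trivial since $U$ is $\F_q=\F_{q^1}$-linear by hypothesis. So assume $d\geq 2$, which is exactly the hypothesis that allows the construction preceding Theorem \ref{final}: there exists an integer $d_0>1$ with $d_0\mid n$ such that $U':=\overline{U}\cap\mathrm{Fix}(\sigma)$ is $\F_{q^{d_0}}$-linear, contains $U$ by Lemma \ref{inclusion}, and satisfies $L_U=L_{U'}$ by Theorem \ref{final}.

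Next, since $U\subseteq U'$, $w_U(P)\leq w_{U'}(P)$ for every $P\in L_U$, and since $U'$ is $\F_{q^{d_0}}$-linear every $w_{U'}(P)$ is a positive multiple of $d_0$. Choosing a point $P_1\in L_{U'}=L_U$ of minimum $U'$-weight yields $d\leq w_U(P_1)\leq w_{U'}(P_1)=d_0$, so the inequality $d\leq d_0$ is for free.

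The crux is then to show $U=U'$: this immediately gives that $U$ is $\F_{q^{d_0}}$-linear and hence $d\geq d_0$ (all weights with respect to an $\F_{q^{d_0}}$-linear subspace are multiples of $d_0$), so combined with $d\leq d_0$ we obtain $d=d_0\mid n$ and $U$ is $\F_{q^d}$-linear as asserted. To force $U=U'$ under the hypothesis $n\mid m$, I plan to compare the cardinality identities
\[q^m-1=\sum_{P\in L_U}\bigl(q^{w_U(P)}-1\bigr),\qquad q^{\dim_{\F_q}U'}-1=\sum_{P\in L_U}\bigl(q^{w_{U'}(P)}-1\bigr),\]
and argue that the hypothesis $n\mid m$, together with $d_0\mid n$, prevents a strict dimension gap $\dim_{\F_q}U'>m$: such a gap would force $\dim_{\F_q}U'\geq m+d_0$ and a coordinated jump in the weights of $L_U$ that is incompatible with the divisibility structure of the rank, since $w_{U'}(P)$ is pinned to multiples of $d_0$ while $w_U(P)$ and the cardinality $|L_U|$ are fixed.

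The main obstacle is precisely this last step: converting the set-theoretic equality $L_U=L_{U'}$ furnished by Theorem \ref{final} into the vector-space equality $U=U'$. The geometric content has already been extracted in Theorem \ref{final}; what remains is a purely combinatorial rank-uniqueness argument, and the hypothesis $n\mid m$ should enter it in an essential way, since without this divisibility the rank of an $\F_q$-linear set need not be uniquely determined by the linear set itself.
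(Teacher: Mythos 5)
Your reduction to Theorem \ref{final} and your identification of the crux --- upgrading the set equality $L_U=L_{U'}$ with $U'=\la U\ra_{\F_{q^{d_0}}}$ to the subspace equality $U=U'$ --- match the paper's strategy, and the observation that $d\le d_0$ comes for free is correct. The gap is exactly at the step you yourself flag as the main obstacle: the weight-sum identities cannot close it. From $\sum_{P}(q^{w_{U'}(P)}-1)=q^{m+hd_0}-1$, $\sum_P(q^{w_U(P)}-1)=q^m-1$, $w_U(P)\le w_{U'}(P)$ and $d_0\mid w_{U'}(P)$ there is no arithmetic contradiction: subtracting gives $q^m(q^{hd_0}-1)=\sum_P q^{w_U(P)}\bigl(q^{w_{U'}(P)-w_U(P)}-1\bigr)$, a sum of nonnegative terms about which your divisibility constraints say nothing decisive. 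Indeed, below maximum rank the same point set can in general be cut out by $\F_q$-subspaces of different dimensions (the paper notes the rank is not well defined in general), so no purely combinatorial identity in the weights and the cardinality of $L_U$ can by itself force $\dim_{\F_q}U'=m$.

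What is missing is the geometric input that makes $n\mid m$ bite. Writing $m=an$, one has $|L_U|\le(q^{an}-1)/(q-1)$, which is below the Bose--Burton bound $(q^{n(a+1)}-1)/(q^n-1)$ for a set meeting every $(r-a-1)$-dimensional projective subspace; hence some $(r-a-1)$-subspace $\pi$ is disjoint from $L_U$. If $U\subsetneq U'$ then $\dim_{\F_q}U'=an+hd_0$ with $h\ge 1$, while the $\F_q$-vector space underlying $\pi$ has dimension $(r-a)n$; Grassmann's identity then forces these two subspaces to meet nontrivially, i.e. $L_{U'}\cap\pi\ne\emptyset$, contradicting $L_{U'}=L_U$. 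This yields $U=U'$, after which a short iteration (as in the paper) raises $d_0$ to the actual minimum weight $d$. You should replace your counting step with this disjoint-subspace/Grassmann argument.
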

	\begin{proof}
		Let $m=an$, $1 \leq a  \leq r-1$.  The minimum size for a blocking set of  $\PG(r-1,q^n)$ with respect $(r-a-1)$-dimensional projective subspaces is $\frac{q^{n(a+1)}-1}{q^n-1}$, see \cite{BB66}. We have $|L_U|\leq \frac{q^m-1}{q-1}=\frac{q^{an}-1}{q-1}$ (see, e.g., \cite{OP2010}), and hence $|L_U|<\frac{q^{n(a+1)}-1}{q^n-1}$ and there exists an  $(r-a-1)$-dimensional projective subspaces $\pi$ of  $\PG(r-1,q^n)$ disjoint from $L_U$. Suppose that $L_U=L_{\la U \ra_{\F_{q^d}}}$ and $U$ is properly contained in $\la U \ra_{\F_{q^d}}$, then $\dim_q \la U \ra_{\F_{q^d}}=an +hd$, for some $h \geq 1$, $d\mid n$, $d \geq 2$. Then, by the Grassmann formula, the vector space inducing $\pi$ and $ \la U \ra_{\F_{q^d}}$ have non-zero  intersection, that is, $L_{\la U \ra_{\F_{q^d}}} \cap \pi \neq \emptyset$, a contradiction. Hence $U$ is an $\F_{q^d}$-vector subspace. If $d$ is smaller than the minimum weight, then we can iterate the process until we find a $d'$, $d < d' \mid n$ such that $U$ is an $\F_{q^{d'}}$-vectors space and $d'$ is the minimum weight.
	\end{proof}
	
	By the proof above, it also follows the dimension of $\la U \ra_{\F_{q^d}}$ over $\F_q$ is at most $\lceil \frac{m}{n} \rceil$.

	\begin{remark}
		\rm{We stress out that the minimum weight defines the field of linearity of $L_U$ only if $L_U$ is of maximum rank. For lower ranks, the field of linearity is defined by $d \mid n$ such that $\la U \ra_{\F_{q^n}}=U_0+U_1+\cdots + U_{d-1}$ and $U_0^{\sigma^d}=U_0$. Here, an example. Let $\mathrm{Tr}:\F_{q^3}\rightarrow \F_q$ the usual trace map and let $U=\{(x,y),x,y \in \F_{q^3}, \mathrm{Tr}(y)=0 \}$ be an  $\F_{q}$-subspace of $V(2,q^6)$, then $L_U$ is an $\F_q$-linear set of $\PG(1,q^6)$ of rank $5$. The maximum rank for an $\F_q$-linear set of $\PG(1,q^6)$ is $6$. If $(x,y) \in U$ and $y \neq 0$, then $(\lambda x, \lambda y) \in U$ if and only if $\lambda \in \F_{q^3}$ such that $\mathrm{Tr}(\lambda y)=0$, hence $\lambda$ belongs to an $\F_{q}$-vector space of dimension 2. If $y=0$, $\lambda (x,0) \in \forall\, \lambda \in \F_{q^3}$. Therefore, every point of $L_U$ has weight at least 2. In $\displaystyle\opl_{i=0}^5 V_i$, $U=\{(x,y,x^q,y^q,x^{q^2},-y-y^q,x,y,x^q,y^q,x^{q^2},-y-y^q) : x, y \in \F_{q^3}\}$. Then $\la U \ra _{\F_{q^6}}=\{(x_0,y_0,x_1,y_1,x_{2},-y_0-y_1,x_0,y_0,x_1,y_1,x_2,-y_0-y_1) : x_i,y_i \in \F_{q^6}\}$ and $U_0=\{(0,0,x_1,y_1,x_2,-y_1,0,0,x_1,y_1,x_2,-y_1) : x_i,y_i \in \F_{q^6}\}$ is fixed by $\sigma^3$.}

	\end{remark}
	
	\section{Consequences on the size of a linear set}
	\label{sec:4}
	
	In \cite{DBVdV}, it has been proved that the minimum size of a linear set of $\PG(1,q^n)$ of rank $k$ is $q^{k-1}+1$, provided that there is at least a point of weight $1$. Now we can rephrase the result:

	Let $L$ be a linear set of $\PG(1,q^n)$  such that $\F_q$ is the maximum field of linearity and such that its rank with respect $\F_q$ is $k$. Then $|L|\geq q^{k-1}+1$.

	Let $L=L_U$ be a linear set of $\PG(r-1,q^n)$ such that $\F_q$ is the maximum field of linearity and such that its \underline{rank with respect to $\F_q$ is $(r-2)n+k$ for some $k>0$}. Let $P$ be a point of weight $1$. By Grassmann's identity, we know that every line contains at least $q^k$ vectors of $U$.
	Put $\theta=(q^{n(r-1)}-1)/(q^n-1)$. By counting the vectors in each line through $P$ not defining the point $P$, we get
	\[
	\displaystyle\sum_{i=1}^{\theta}(q^{h_i}-q)=q^{(r-2)n+k}-q,
	\]
	where $h_i \geq k$ is the weight of the line $\ell_i$, $i=1,2,\ldots,\theta$, incident with $P$. Hence $\displaystyle\sum_{i=1}^{\theta}q^{h_i}=q^{(r-2)n+k}+q(\theta-1)$.
	By \cite{DBVdV}, on each line through $P$ we have at least $q^{h_i-1}+1$ points, hence
	\[
	|L_U| \geq \displaystyle\sum_{i=1}^{\theta}q^{h_i-1}+1 = q^{(r-2)n+k-1} +\theta.
	\]
	
	Let the \underline{$\F_q$-rank of $L_U$ be $k \leq (r-2)n$} and let $P$ be a point of weight $1$. If there is at least a line intersecting $L_U$ in an $\F_q$-subline, then by \cite[Theorem 1.4]{SP} lower bounds on the size of $L_U$  are known. Now suppose that this is not the case and every line through $P\in L_U$ is either tangent or meets $L_U$ in an $\F_q$-linear set of rank at least $3$, then the projection of $L_U$ from $P$ on a hyperplane $H$ not through $P$ is a linear set of $\PG(r-2,q^n)$ of rank $k-1 \leq (r-2)n-1$ such that every point has weight at least $2$, so it is equivalent to an $\F_{q^d}$-linear set for some divisor $1<d \mid n$. When $n$ is a prime then this is only possible if $n=d$ and hence the projection of $L_U$ is a projective subspace $H'$ of $H$. Then $\la L_U\ra=\la H',P\ra$ and $|L_U|\geq |H'|\,q^2+1$. Since $|L_U|\leq (q^{(r-2)n}-1)/(q-1)$, $H'$ is a proper subspace of $H$ and hence $\la L_U \ra \neq \PG(r-1,q^n)$. See the related  \cite[Open Problem (B)]{vdvn}.
	
	In this paper we proved that if $L_U$ has no points of weight one then it is linear over a larger field. One might wonder whether the absence of lines of weight $2$ yields the same conclusion. This is not the case since
	Example \ref{new} provides some $\F_q$-linear sets $L_U$, $\la L_U\ra = \PG(2,q^n)$, with maximum field of linearity $\F_q$ and such that there are no lines meeting $L_U$ in an $\F_q$-subline.
	We will need the following lemma.
	
	\begin{lemma}
		\label{nw}
		If $S\subseteq \PG(1,q^{mn})$, $q\geq mn$, is an $\F_{q^d}$-linear set of size $q^n+1$ then $d$ divides $n$.
	\end{lemma}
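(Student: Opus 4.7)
The plan is to pass from $\F_{q^d}$ to the full maximum field of linearity of $S$ and extract the conclusion from the size bounds for linear sets of $\PG(1,q^{mn})$. Let $\F_{q^e}$ denote the maximum field of linearity of $S$, so that $d\mid e$ and $e\mid mn$; it thus suffices to show $e\mid n$. Viewing $S$ as an $\F_{q^e}$-linear set of $\PG(1,(q^e)^{n'})$ with $n':=mn/e$, let $k$ be its $\F_{q^e}$-rank. If $k>n'$ then $S=\PG(1,q^{mn})$, so $q^n+1=q^{mn}+1$ forces $m=1$ and $e\mid mn=n$; from now on assume $k\leq n'$.

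I would then apply Theorem~\ref{thm:main} with base field $\F_{q^e}$, ambient extension degree $n'$, and $r=2$. Its hypothesis $n'\leq q^e$ follows from $q\geq mn$, since $n'\leq mn\leq q\leq q^e$. By the maximality of $\F_{q^e}$, the theorem's conclusion---linearity over a strictly larger field $\F_{q^{ed'}}$ with $d'\geq 2$---cannot hold, so $S$ must contain a point of $\F_{q^e}$-weight~$1$. The bound of \cite{DBVdV} recalled at the start of Section~\ref{sec:4} then yields $|S|\geq q^{e(k-1)}+1$, and combining it with the universal upper bound $|S|\leq (q^{ek}-1)/(q^e-1)$ together with $|S|=q^n+1$, I obtain
\[
q^{e(k-1)}\leq q^n \leq q^e+q^{2e}+\cdots+q^{e(k-1)}.
\]

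Writing $n=e(k-1)+t$ with $t\geq 0$ and dividing the right-hand inequality by $q^{e(k-1)}$ gives
\[
q^t \leq 1+q^{-e}+q^{-2e}+\cdots+q^{-e(k-2)} < \frac{q^e}{q^e-1} \leq 2,
\]
where the last inequality uses $q\geq 2$ and $e\geq 1$. Since $q^t$ is a power of $q\geq 2$, this forces $t=0$, so $n=e(k-1)$, hence $d\mid e\mid n$. The delicate point is the very first step: the hypothesis $q\geq mn$ is precisely what is needed to inflate the base field from $\F_q$ to $\F_{q^e}$ while preserving the hypothesis $n'\leq q^e$ required by Theorem~\ref{thm:main}; without this one could not guarantee the weight-$1$ point that drives the whole estimate.
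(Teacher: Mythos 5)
Your proof is correct and rests on exactly the same three ingredients as the paper's own argument --- maximality of the field of linearity combined with Theorem \ref{thm:main} to force a point of weight one, the De Beule--Van de Voorde lower bound, and the standard upper bound $(q^{ek}-1)/(q^e-1)$ on the size of a linear set --- merely rearranged so that $e \mid n$ is read off directly from the sandwich $q^{e(k-1)} \leq q^n \leq q^e+\cdots+q^{e(k-1)}$ rather than by contradiction. One small wording fix: take $e$ maximal among the \emph{multiples of $d$} for which $S$ is $\F_{q^e}$-linear (the paper's definition of ``maximum field of linearity'' does not by itself guarantee $d \mid e$); with that choice the contradiction with Theorem \ref{thm:main} still goes through, since the larger field it produces has degree a multiple of $e$, hence of $d$.
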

	\begin{proof}
		Assume that $d$ is maximal such that $d$ does not divide $n$ and $S=L_U$ for some $k$-dimensional $\F_{q^d}$-subspace $U$. Then the size of $L_U$ is at most $1+q^d+\ldots+q^{(k-1)d}$ and hence $(k-1)d \geq n+1$ (since $d$ does not divide $n$). Hence $k \geq (n+1)/d+1$. If $L_U$ has a point of weight $1$ then the size of $L_U$ is at least $q^{n+1}+1$, cf. \cite{DBVdV}, a contradiction. Thus $L_U$ does not contain points of weight $1$ and hence it is linear over an extension of $\F_{q^d}$, cf. Theorem \ref{thm:main},  contradicting the choice of $d$.
	\end{proof}

	\begin{example}
		\label{new}
		Put $n=4k+2$ for some positive integer $k$. Consider the $3$-dimensional $\F_{q^n}$-vector space $W=W_1\oplus W_2 \oplus W_3$, where $W_i$ is a one-dimensional $\F_{q^n}$-subspace for $i=1,2,3$ and $q\geq n$. Also, consider $U_1$ and $U_2$, two one-dimensional $\F_{q^{n/2}}$-subspaces contained in $W_1$ and $W_2$, respectively. Let $V$ denote an $(n/2+2)$-dimensional $\F_q$-subspace of $U_1 \oplus U_2$.
		
		Then $L_{U_1\oplus U_2}\cong\PG(1,q^{n/2})$ is contained in $\ell_0:=L_{W_1\oplus W_2}\cong\PG(1,q^n)$. The points of $L_{U_1\oplus U_2}$ have weight $n/2$. By Grassmann's Identity, $L_V=L_{U_1 \oplus U_2}$ and the points of $L_V$ have weight at least $2$. Note that there is at least one point $Q$ in $L_V$ of weight $2$ since otherwise we would have $|V|\geq (q^3-1)(q^{n/2}+1)+1$, a contradiction.
		
		Take a vector ${\bf v}\in W\setminus (W_1 \oplus W_2)$ and define $U=\la V, {\bf v}\ra_{\F_q}$. The linear set $L_U$ of $\PG(2,W)\cong \PG(2,q^n)$ is of rank $n/2+3$. The points of $L_U\setminus \ell_0$ have weight $1$ since otherwise $U$ would have dimension larger than $n/2+3$. If $P_i=\la \lambda_i {\bf v}+{\bf v_i}\ra$, $\lambda_i\in \F_q^*$, ${\bf v_i} \in V$, for $i=1,2$, are two distinct points of $L_U \setminus \ell_0$ then $\la P_1,P_2\ra \cap \ell_0 = \la -\lambda_2{\bf v_1}+\lambda_1{\bf v_2}\ra$ is a point of $L_U$. The line $\la P_1, P_2\ra$ meets $L_U$ in a linear set of size $q^k+1$, where $k\geq 2$ is the weight of the point $\la P_1,P_2 \ra \cap \ell_0 \in L_U$. On the other hand, $L_U$ is not a linear set over $\F_{q^d}$, $d>1$. Indeed, because of the $(q^2+1)$-secants of $L_U$ (for example the non-tangent lines through $Q$ different from $\ell_0$), the only possibility is $L_U=L_{U'}$ for some $\F_{q^2}$-subspace $U'$ of $W$. But then $L_U \cap \ell_0 = L_{U_1 \oplus U2}\cong\PG(1,q^{n/2})$ would be an $\F_{q^2}$-linear set which contradicts Lemma \ref{nw}.
	\end{example}

\section*{Acknowledgment}
	This work was supported by the Italian National Group for Algebraic and Geometric Structures and their Applications (GNSAGA--INdAM). The first author acknowledges the partial support of the National Research, Development and Innovation Office – NKFIH, grant no. K 124950.

	\bigskip
	\bigskip
	
	\noindent Bence Csajb\'ok\\
	Dipartimento di Meccanica, Matematica e Management,\\ Politecnico di Bari,\\ Via Orabona 4, 70125 Bari, Italy; \\
	{\em bence.csajbok@poliba.it}

	\bigskip
	
	\noindent Giuseppe Marino\\
	Dipartimento di Matematica e Applicazioni ``Renato Caccioppoli",\\
	Università degli Studi di Napoli Federico II,\\
	Via Cintia, Monte S.Angelo I-80126 Napoli, Italy\\
	{\em giuseppe.marino@unina.it}
	
	\bigskip
	\noindent
	Valentina Pepe\\
	Dipartimento di Scienze di Base ed Applicate per l’Ingegneria,\\
	``Sapienza" Università
	di Roma, \\
	Via Antonio Scarpa, 10, 00161 Roma, Italy\\
	{\em valentina.pepe@uniroma1.it}
	

\begin{thebibliography}{pippo}
		
		
		
		\bibitem{SP}
		{\sc S. Adriaensen and P. Santonastaso:}
		On the minimum size of linear sets, \url{https://arxiv.org/abs/2301.13001}
		
		\bibitem{B2003}
		{\sc S. Ball:}
		The number of directions determined by a function over a finite field, \emph{J. Combin. Theory Ser. A}
		{\bf 104} (2003), 341--350.
		
		
		\bibitem{BBBSSz}{\sc S. Ball, A. Blokhuis, A.E. Brouwer, L. Storme and T. Sz\"onyi:}
		On the number of slopes of the graph of a function defined over a finite field, \emph{J. Combin. Theory Ser. A} {\bf 86} (1999), 187--196.
		
		
		\bibitem{BB66} {\sc R.C. Bose and R.C. Burton:} A characterization of flat spaces in a finite
		geometry and the uniqueness of the Hamming and the MacDonald codes, \emph{J.
			Combin. Theory} {\bf 1} (1966), 96–-104.
		
		\bibitem{DBVdV}
		{\sc J. De Beule and G. Van de Voorde:} {The minimum size of a linear set}, \emph{J. Combin. Theory Ser. A} {\bf 164} (2019), 109-–124.
		
		\bibitem{CsMP}
		{\sc B. Csajb\'ok, G. Marino and O. Polverino:}
		{Classes and equivalence of linear sets in $\mathrm{PG}(1,q^n)$},
		\emph{J. Combin. Theory Ser. A }{\bf 157} (2018), 402--426.
		
		\bibitem{direzioni}
		{\sc Sz. L. Fancsali, P. Sziklai, M. Takáts:}
		{\emph The number of directions determined by less than $q$ points}, J. Algebr.\ Comb. {\bf 37} (2013) 27--37.
		
		\bibitem{giuzzipepe15} {\sc L. Giuzzi and V. Pepe:}  On some subvarieties of the Grassmann variety, \emph{Linear Multilinear Algebra} {\bf 63} (2015), 2121--2134.
		
		\bibitem{vdvn}
		{\sc D. Jena, G. Van de Voorde:}
		On linear sets of minimum size, \emph{Discrete Mathematics} {\bf 344}(3), (2021), 112230.
		
		
		\bibitem{finitefields} {\sc R. Lidl and H. Niederreiter:} \emph{Finite fields. With a foreword by P. M. Cohn.}, Encyclopedia of Mathematics and its Applications, 20, Cambridge University Press, Cambridge, 1997.
		
		\bibitem{Lu1999} {\sc G. Lunardon:} Normal spreads, \emph{Geom.\, Dedicata} \textbf{75} (1999), 245--261.
		
		\bibitem{OP2010} {\sc O. Polverino:} Linear sets in finite projective spaces, \emph{Discrete Math.} {\bf 310} (2010), 3096--3107.
		
		\bibitem{Olga2022} {\sc O. Polverino, P. Santonastaso, J. Sheekey and F. Zullo:} Divisible linear rank metric codes, \url{https://arxiv.org/abs/2211.08180}
		
	\end{thebibliography}
\end{document}